\documentclass[11pt]{article}

\usepackage[a4paper,margin=1.15in]{geometry}
\usepackage{amsmath,amssymb,amsthm,mathtools}
\usepackage{enumitem}
\usepackage{array}
\usepackage{booktabs}
\usepackage{microtype}
\usepackage[colorlinks=true,linkcolor=blue,citecolor=blue,urlcolor=blue]{hyperref}

\newtheorem{theorem}{Theorem}[section]
\newtheorem{proposition}[theorem]{Proposition}
\newtheorem{lemma}[theorem]{Lemma}
\newtheorem{corollary}[theorem]{Corollary}
\theoremstyle{definition}

\newtheorem{remark}[theorem]{Remark}
\newtheorem{question}[theorem]{Question}

\newcommand{\Fp}{\mathbb F_p}
\newcommand{\Fpb}{\overline{\mathbb F}_p}
\newcommand{\PP}{\mathbb P^1}
\newcommand{\Aut}{\operatorname{Aut}}

\newcommand{\PGL}{\operatorname{PGL}}

\title{Artin--Schreier Curves of Prescribed $p$-Rank\\Without Extra Automorphisms}

\author{Saeed Tafazolian\thanks{Department of Mathematics, Institute of Mathematics, Statistics and Scientific Computing (IMECC), University of Campinas (UNICAMP), Campinas, SP, Brazil. Email: \texttt{saeed@unicamp.br}.}~and~Ahmad Kazemifard\thanks{Department of Mathematics, Shahid Chamran University of Ahvaz, Ahvaz, Iran. Email: \texttt{a.kazemifard@scu.ac.ir}.}}
\date{}

\begin{document}
\maketitle

\begin{abstract}
Let $p$ be an odd prime. We study the Artin--Schreier analogue of Poonen's problem of constructing curves without extra automorphisms. An Artin--Schreier curve carries an unavoidable deck group $C_p$, so the smallest possible full geometric automorphism group inside the Artin--Schreier locus is $C_p$. We determine the exact numerical range in which this minimum is attained, for every $p$-rank. More precisely, for integers $r\ge0$ and $N\ge1$, there exists an Artin--Schreier curve $X/\Fp$ with
\[
 g(X)=\frac{p-1}{2}N,\qquad \gamma(X)=r(p-1),\qquad
 \Aut_{\Fpb}(X)\cong C_p
\]
if and only if either $r=0$, $p\nmid N+1$, $N\ge3$ and $(p,N)\ne(3,3)$, or $r\ge1$, $N\ge2r$ and $(r,N)\ne(1,2)$. All models are defined over the prime field by explicit equations together with elementary choices of auxiliary polynomials, and every excluded admissible case is shown to carry an extra geometric automorphism. The main structural input is the theorem of Valentini and Madan on the uniqueness of the rational subfield of index $p$ outside three exceptional families. We exclude these families by intrinsic arguments: the genus and $p$-rank in most cases, Nakajima's bound on $p$-subgroups of automorphism groups of ordinary curves for the ordinary family, and the Lehr--Matignon normal form for isomorphisms of one-point covers for the monomial family.
\end{abstract}

\medskip
\noindent\textbf{Mathematics Subject Classification (2020).} 14H37, 11G20, 14H40, 14G15.

\noindent\textbf{Keywords.} Artin--Schreier curves, automorphism groups, $p$-rank, curves over finite fields, Valentini--Madan theorem.

\section{Introduction}

Poonen proved that for every field $k$ and every $g\ge3$ there exists a curve of genus $g$ over $k$ with trivial automorphism group \cite{PoonenI}. He also proved the hyperelliptic analogue: for every field $k$ and every $g\ge2$ there exists a hyperelliptic curve $X/k$ whose automorphism group consists only of the identity and the hyperelliptic involution \cite{PoonenII}. In the hyperelliptic locus the involution is unavoidable, so the natural condition is the absence of \emph{extra} automorphisms.

The same question has a natural wild analogue. If
\[
 X:\qquad y^p-y=f(x)
\]
is an Artin--Schreier curve in characteristic $p$, then
\[
 (x,y)\longmapsto (x,y+c),\qquad c\in\Fp,
\]
gives a deck group $C_p\le \Aut_{\Fpb}(X)$. Thus the smallest possible geometric automorphism group inside the Artin--Schreier locus is $C_p$. Our main result gives a complete numerical answer inside this locus, including the one-branch-point case.

\begin{theorem}[Sharp numerical classification]\label{thm:main}
Let $p$ be an odd prime, and let $r\ge0$ and $N\ge1$ be integers. There exists a smooth projective geometrically irreducible Artin--Schreier curve $X/\Fp$ such that
\[
 g(X)=\frac{p-1}{2}N,
 \qquad
 \gamma(X)=r(p-1),
 \qquad
 \Aut_{\Fpb}(X)\cong C_p
\]
if and only if one of the following holds:
\begin{enumerate}[label=\textup{(\roman*)}]
\item $r=0$, $p\nmid N+1$, $N\ge3$, and $(p,N)\ne(3,3)$;
\item $r\ge1$, $N\ge2r$, and $(r,N)\ne(1,2)$.
\end{enumerate}
\end{theorem}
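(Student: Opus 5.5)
The plan is to reduce everything to the ramification data of the cover $X\to\PP$, $(x,y)\mapsto x$. Write $X: y^p-y=f(x)$ with $f$ in reduced form, meaning every pole order is prime to $p$. Let $f$ have $m$ poles, with orders $d_1,\dots,d_m\ge1$. Riemann--Hurwitz then gives $g(X)=\frac{p-1}{2}\bigl(\sum_j(d_j+1)-2\bigr)$, and Deuring--Shafarevich gives $\gamma(X)=(p-1)(m-1)$. So $r=m-1$ and $N=\sum_j(d_j+1)-2\ge 2r$. When $r=0$ we have $N=d_1-1$, so $p\nmid N+1$.

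This settles the purely numerical part of necessity. The remaining work is in two directions:
\begin{enumerate}[label=(\alph*)]
\item every admissible pair outside (i)/(ii) forces an extra automorphism;
\item every pair in (i)/(ii) is realised over $\Fp$ with $\Aut_{\Fpb}(X)\cong C_p$.
\end{enumerate}

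\textbf{Excluded cases.} I would handle these by normal forms over $\Fpb$.
\begin{itemize}
\item For $r=0$ and $N\in\{1,2\}$ (so $d\le 3$), translate in $x$ and subtract elements of $\wp(\Fpb[x])$. This reduces $f$ to $x^d$ plus at most a linear term. One then exhibits automorphisms $x\mapsto \zeta x$ or $x\mapsto x+a$, $y\mapsto y+h(x)$, beyond the deck group.
\item For $(p,N)=(3,3)$ we have $d=4$. Here the reduction gives $f=x^4+ax^2+bx$. I expect a translation $x\mapsto x+a$, with $a$ a root of a suitable additive polynomial, to lift to $X$.
\item For $(r,N)=(1,2)$ there are two simple poles. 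Moving them to $0$ and $\infty$ gives $f=c/x+ex$, and the involution $x\mapsto (c/e)/x$ lifts.
\end{itemize}

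\textbf{Sufficiency.} The key input is the Valentini--Madan theorem. Outside its three exceptional families, $\Fpb(x)$ is the unique rational subfield of index $p$. Hence $C_p\trianglelefteq\Aut(X)$, and $\Aut(X)/C_p$ embeds into the subgroup of $\PGL_2(\Fpb)$ of Möbius maps $\sigma$ satisfying two conditions:
\begin{itemize}
\item $\sigma$ preserves the branch locus, with its pole orders;
\item $f\circ\sigma\equiv \lambda f \pmod{\wp(\Fpb(x))}$ for some $\lambda\in\Fp^\times$.
\end{itemize}

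The first step is to rule out the exceptional families for the constructed curves. The genus and $p$-rank rule out most of them. For the ordinary family, where all poles are simple, I would invoke Nakajima's bound on $p$-subgroups of automorphism groups of ordinary curves. For the monomial one-point family $y^p-y=x^d$, I would use the Lehr--Matignon normal form for isomorphisms of one-point covers.

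The second step is to choose the pole data and coefficients over $\Fp$ so that the stabiliser above is trivial.
\begin{itemize}
\item For $r\ge2$, I take poles at distinct points of $\Fp\cup\{\infty\}$ or at conjugate points, with pairwise different orders where possible. This kills the permutation action; any residual $\sigma$ fixing three or more points is the identity.
\item For $r=1$, the two poles can be put at $0$ and $\infty$. Then only $x\mapsto ax$ and $x\mapsto a/x$ survive, and a nonzero term of a single carefully chosen exponent kills these.
\item For $r=0$ we have one pole at $\infty$, so $\sigma$ is affine, $x\mapsto ax+b$. Compatibility of the reduced parts of $f\circ\sigma$ and $\lambda f$ modulo $\wp$ forces $a^d=\lambda$ and constrains $b$ through the lower coefficients. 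I would take $f=x^d+x^{e}$, or $f=x^d+x^{e}+x$, with $e<d$ prime to $p$ and $\gcd$ conditions making $a=1$ forced. I would then show that no $b\ne0$ preserves the class modulo $\wp$ for $d\ge4$.
\end{itemize}

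\textbf{Main obstacle.} I expect the hardest part to be the one-branch-point case. There the affine stabiliser interacts with Artin--Schreier equivalence: translations $x\mapsto x+b$ produce $p$-th-power terms that can be absorbed by $\wp$. The small-degree boundary cases, such as $d=4,5$ and $p=3$, need separate treatment, which is exactly where the $(3,3)$ exception appears. I would handle this by expanding $(x+b)^d$, extracting the coefficients of the exponents prime to $p$ (after replacing $c\,x^{pk}$ by $c^{1/p}x^k$), and showing that the resulting system forces $b=0$ for the chosen trinomial. Lehr--Matignon gives the precise form these isomorphisms can take.
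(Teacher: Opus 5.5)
Your framework is the paper's: numerical necessity from Riemann--Hurwitz and Deuring--Shafarevich, and the same extra automorphisms for $N\in\{1,2\}$, $(p,N)=(3,3)$ and $(r,N)=(1,2)$. For $(3,3)$, a nonzero root of the additive polynomial $-a_4^3B^9+a_2^3B^3-a_4B$ gives the lifting translation. Then comes Valentini--Madan descent, with Nakajima and Lehr--Matignon for the exceptional families.

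\textbf{Main gap: $r\ge2$ near the sharp boundary.} The rigidity mechanism you propose for $r\ge2$ fails at and near $N=2r$. When $N=2r$ all $r+1$ poles are simple, so there are no distinct orders to exploit. For $r=2,3$ no choice of positions helps. $\PGL_2(\Fpb)$ is sharply $3$-transitive, so every permutation of three branch points is induced by a M\"obius map. Any four points are preserved by a Klein four-group of M\"obius maps inducing the double transpositions, since these preserve the cross-ratio. The same happens for $r=2$ with $m=N-2r\in\{1,2\}$ (orders $(2,1,1)$, $(3,1,1)$, $(2,2,1)$), and for $p=3$ with $m=3$ (orders $(4,1,1)$ or $(2,2,2)$). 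In each case two poles of equal order are swapped by a M\"obius map fixing the third.

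So ``any residual $\sigma$ fixing three or more points is the identity'' never applies there; you must use the principal parts of $f$ through the coboundary equation. The paper does this at $m=0$ with $f=1/C_{r+1}$. All poles are simple, so $h$ is constant, hence $M^*df=u\,df$ and $M$ preserves the zeros of $df$ with multiplicities. $C_{r+1}$ is chosen so that $\infty$ is the unique zero of maximal order, which makes $M$ affine, and an affinely asymmetric root set then forces $M=1$. For $r=2$ and small $m$ the paper checks the finitely many candidate substitutions against the coboundary equation by hand. Even where a rigid configuration exists, you still have to produce one over $\Fp$ and prove its rigidity, as in Lemmas~\ref{lem:asym-poly} and~\ref{lem:p-point-set}.

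\textbf{The ordinary boundary.} At $(r,N)=(p-1,2p-2)$, ``invoke Nakajima'' is not yet a criterion. The bound only shows that $C_p\times C_p$ is a Sylow $p$-subgroup of $\Aut(E_a)$. You must still deduce that all Artin--Schreier structures on $E_a$ are conjugate. Then any reduced $f$ birational to $E_a$ satisfies $a/(M(x)^p-M(x))=uf+\text{const}$, so its branch set is a M\"obius image of $\Fp\subset\PP$ and is stabilized by a conjugate of $x\mapsto x+1$.
\begin{itemize}
\item For $p\ge5$ you then need an explicit $p$-point set with trivial projective stabilizer; this is the computation for $x^p+x^2+x+1$.
\item For $p=3$ this test is empty, because every three-point set is projectively equivalent to $\mathbb F_3$. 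One needs the residue-level invariant of Lemma~\ref{lem:E-rigid}(c) together with a specific curve such as $y^3-y=x^2/(x^3+2x^2+1)$.
\end{itemize}

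\textbf{$p$-rank zero.} Here you defer exactly the decisive step, and in the wrong order. The $x^e$-coefficient of $f(ax+b)$ in general involves $b$, so $b=0$ must come before any $\gcd$ argument yields $a=1$. When $d\equiv1\pmod p$, the $x^{d-1}$-coefficient $da^{d-1}b$ sits on a $p$-th power monomial and is absorbed by a coboundary. So $b$ must be killed at lower degrees where suitable binomial coefficients vanish. This is why the paper uses $x^d+x^{d-2}+x^{d-5}$ in that case, with $\binom d2\equiv\binom d3\equiv0$ for $p\ge5$ and a split of $d$ modulo $9$ via Lucas' theorem for $p=3$. It also needs separate models for $3\le N\le6$.
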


Here $\gamma(X)$ denotes the $p$-rank of $X$. Every Artin--Schreier curve has genus of the form $\frac{p-1}2N$ and $p$-rank of the form $r(p-1)$ (Proposition~\ref{prop:formulas}), so Theorem~\ref{thm:main} covers all numerically possible pairs. For positive $p$-rank, the inequality $N\ge2r$ is forced by the number of poles, and the unique missing boundary point $(r,N)=(1,2)$ has an unavoidable involution. For $p$-rank zero there is exactly one branch point, of lower jump $d=N+1$; hence $p\nmid N+1$ is necessary. The only further obstructions are $N=1$, $N=2$, and the characteristic-three point $(p,N)=(3,3)$; in each of them \emph{every} admissible curve has an extra automorphism (Proposition~\ref{prop:zero-obstructions}).

The following consequence is the direct Artin--Schreier analogue of Poonen's ``no extra automorphisms'' theorem.

\begin{corollary}[Poonen-type existence theorem]\label{thm:poonen-type}
Let $p$ be an odd prime. For every integer $N\ge3$ there exists a smooth projective geometrically irreducible Artin--Schreier curve $X/\Fp$ such that
\[
 g(X)=\frac{p-1}{2}N,\qquad
 \gamma(X)=p-1,\qquad
 \Aut_{\Fpb}(X)\cong C_p.
\]
\end{corollary}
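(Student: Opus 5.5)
The corollary is the case $r=1$ of Theorem~\ref{thm:main}. We only need to check that, for $r=1$ and every $N\ge3$, the hypotheses of alternative~(ii) hold. The inequality $N\ge 2r=2$ is immediate from $N\ge3$. The single excluded pair $(r,N)=(1,2)$ cannot occur, because $N\ge3$. Theorem~\ref{thm:main}(ii) then gives a smooth projective geometrically irreducible Artin--Schreier curve $X/\Fp$ with
\[
g(X)=\tfrac{p-1}{2}N,\qquad \gamma(X)=p-1,\qquad \Aut_{\Fpb}(X)\cong C_p,
\]
which is the statement of the corollary. As a side remark, $r=1$ also avoids the divisibility condition $p\nmid N+1$ of alternative~(i), so the corollary holds for every $N\ge3$ with no congruence restriction.

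If one wants a proof that does not pass through the full classification, the plan is to use the existence half of Theorem~\ref{thm:main} for $r=1$ directly. Since $\gamma=p-1$, the curve has exactly two branch points, which we move to $0$ and $\infty$. We take
\[
y^p-y=f(x)=g(x)+\frac{c}{x^{m}},
\]
where $g\in\Fp[x]$ has degree $d_\infty$, both $d_\infty$ and $m$ are prime to $p$, and $d_\infty+m=N$. The genus formula of Proposition~\ref{prop:formulas} then gives $g(X)=\frac{p-1}{2}(d_\infty+1+m+1-2)=\frac{p-1}{2}N$. For the $p$-rank, Deuring--Shafarevich gives $\gamma(X)=(p-1)(2-1)=p-1$.

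The main obstacle is showing that $\Aut_{\Fpb}(X)=C_p$. For this we would invoke Valentini--Madan: outside the three exceptional families, the deck group is normal, so every automorphism descends to an element of $\PGL_2$ preserving $\{0,\infty\}$ and the divisor of $f$ modulo $\wp$. We would choose $d_\infty\ne m$ where possible, so that $0$ and $\infty$ cannot be swapped. The remaining diagonal maps $x\mapsto\lambda x$ are then eliminated by requiring $g$ to have two nonzero coefficients whose exponents have coprime differences. The exceptional families are excluded by their genus and $p$-rank. This is exactly the argument the paper carries out, so in the text we simply cite Theorem~\ref{thm:main}.
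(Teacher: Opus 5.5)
Your proof is correct and is exactly the paper's argument: specialize Theorem~\ref{thm:main} to $r=1$, where condition (ii) reduces to $N\ge3$. Your optional direct sketch also mirrors Proposition~\ref{prop:two-poles}. One caveat on that sketch: at $(p,N)=(3,4)$ both jumps are forced to equal $2$, so the swap $x\mapsto\lambda/x$ cannot be ruled out by distinct jumps and needs a separate coefficient argument, which the paper supplies with the model $x^2+x+x^{-2}$.
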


The lower bound $N\ge3$ in this corollary is sharp: when $N=2$, both lower jumps are $1$, and every such cover has an involution in addition to the deck group; see Proposition~\ref{prop:N2-impossible}. For every $r\ge2$, by contrast, the positive-$p$-rank boundary $N=2r$ is attained.

\paragraph{Relation with prescribed $p$-rank results.}
The $p$-rank strata of Artin--Schreier curves were studied by Pries and Zhu \cite{PriesZhu}. If the poles of $f$ have orders $d_1,\ldots,d_s$, all prime to $p$, then
\[
 g(X)=\frac{p-1}{2}\left(-2+\sum_{i=1}^s(d_i+1)\right),
 \qquad
 \gamma(X)=(s-1)(p-1).
\]
Thus prescribing the $p$-rank of an Artin--Schreier curve amounts to prescribing the number of poles, whereas controlling the full geometric automorphism group is a separate condition.

Achter, Glass and Pries proved that, over an algebraically closed field $k$ of characteristic $p$, for every $g\ge3$ and $0\le f\le g$ there exists a curve of genus $g$ and $p$-rank $f$ with trivial automorphism group, together with the corresponding statement for hyperelliptic curves \cite[Corollary~1.1]{AGP}. Their result is moduli-theoretic: they point out that it gives no information about the field of definition, and they record as open whether such curves exist over $\Fp$ in general, and, in odd characteristic, whether the hyperelliptic analogue holds over $\Fp$ \cite[Remark~1.2]{AGP}. Our results do not solve those problems, since an Artin--Schreier curve necessarily has a subgroup $C_p$. Instead, we ask for the minimal possible automorphism group \emph{inside the Artin--Schreier locus}, require the curve to be defined over $\Fp$, and determine the full geometric automorphism group of explicit models.

\paragraph{Relation with the one-branch-point theory.}
Lehr and Matignon study $p$-cyclic covers of $\PP$ ramified at exactly one point, that is, the $p$-rank-zero Artin--Schreier case considered in Section~\ref{sec:one-pole}. Their main concern is the structure of the $p$-Sylow subgroups of the full automorphism group: they prove that the wild inertia group at the unique branch point is a $p$-Sylow subgroup and describe it as a central extension of the deck group by an elementary abelian $p$-group \cite[Theorem~1.1]{LehrMatignon}. They also show that an isomorphism between two such covers may be modified by an automorphism so that it descends to an affine map of the base \cite[Proposition~3.3]{LehrMatignon}; we use this to exclude the monomial exceptional family. We do not claim a new structural classification of one-point covers. Our use of the one-point theory is different: we determine, over the prime field, the exact numerical range in which the \emph{full geometric} automorphism group can be as small as the deck group.

\paragraph{Method.}
The structural input is the theorem of Valentini and Madan \cite{VM80}: outside three explicit exceptional Artin--Schreier families, the rational subfield of index $p$ is unique, so every geometric automorphism descends to a fractional linear transformation of the $x$-line. A convenient modern statement of the exceptional families is \cite[Corollary~2.16]{DGLMS}:
\begin{equation}\label{eq:exceptional-families}
 y^p-y=\frac{a}{x^p-x}\quad(a\in k^\times),\qquad
 y^p-y=\frac{1}{x^{\lambda}}\quad(\lambda\mid p+1),\qquad
 y^3-y=\frac{i}{x(x-1)}\quad(i^2=2).
\end{equation}
Once descent is available, the problem becomes a rigidity question for the branch data: the induced projective transformation must preserve the branch points with their lower jumps (Lemma~\ref{lem:jumps}), and must satisfy the coboundary equation~\eqref{eq:lift}. Our models are designed so that this forces the identity. The main technical point is to certify, for each model, that it is not birational to an exceptional curve. Most cases are excluded by the genus and the $p$-rank alone (Lemma~\ref{lem:exceptional-invariants}). The two remaining coincidences are handled intrinsically:
\begin{itemize}
\item at the ordinary boundary $(r,N)=(p-1,2p-2)$, the curves $y^p-y=a/(x^p-x)$ are ordinary, and Nakajima's bound \cite[Theorem~1]{Nakajima} forces all their Artin--Schreier structures to be conjugate (Lemma~\ref{lem:E-rigid});
\item in $p$-rank zero, when $N+1\mid p+1$, the Lehr--Matignon normal form forces a curve birational to $y^p-y=x^{-\lambda}$ to have a very special defining polynomial (Lemma~\ref{lem:monomial}).
\end{itemize}
These arguments are independent of the standard-form machinery of \cite{DGLMS}, whose Lemmas~3.6 and~3.8 give alternative proofs of the same exclusions.

\paragraph{Organization.}
Section~\ref{sec:prelim} records the genus and $p$-rank formulas, the Valentini--Madan descent theorem, and the exclusion of the exceptional families. Section~\ref{sec:one-pole} proves the sharp one-branch-point classification, including the small obstructions. Section~\ref{sec:two-poles} treats two branch points. Section~\ref{sec:many-poles} treats three or more branch points and proves the full sharp range $N\ge2r$. Section~\ref{sec:proofs} assembles the proofs and records consequences for finite fields and Jacobians; Section~\ref{sec:remarks} summarizes the models and lists further questions.

\section{Artin--Schreier covers and descent of automorphisms}\label{sec:prelim}

Let $k=\Fpb$. By an \emph{Artin--Schreier curve} over a field $F\subseteq k$ of characteristic $p$ we mean the smooth projective model $X$ of an equation
\[
 X:\qquad y^p-y=f(x),\qquad f(x)\in F(x)\setminus\{h^p-h:\ h\in k(x)\}.
\]
Replacing $f$ by $f+h^p-h$ does not change the extension $k(X)/k(x)$. We choose an \emph{Artin--Schreier reduced} representative: at every pole, its principal part contains no term whose pole order is divisible by $p$. In particular, every pole order of $f$ is prime to $p$; when $f\in k[x]$, this means that no nonconstant monomial of $f$ has exponent divisible by $p$. A nonconstant reduced rational function with a pole cannot be of the form $h^p-h$: indeed, every pole of $h^p-h$ has order divisible by $p$. Thus the reduced equations used below define nontrivial degree-$p$ Artin--Schreier extensions, and their smooth projective models are geometrically irreducible.

\begin{proposition}[Riemann--Hurwitz and Deuring--Shafarevich]\label{prop:formulas}
Suppose that $f$ has exactly $s$ poles $P_1,\ldots,P_s$ on $\PP$, of orders $d_1,\ldots,d_s$, where $p\nmid d_i$ for every $i$. Then
\[
 g(X)=\frac{p-1}{2}\left(-2+\sum_{i=1}^s(d_i+1)\right)
 \qquad\text{and}\qquad
 \gamma(X)=(s-1)(p-1).
\]
\end{proposition}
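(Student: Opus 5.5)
The genus formula will come from the Riemann--Hurwitz formula for the degree-$p$ cover $\pi\colon X\to\PP$, $(x,y)\mapsto x$, combined with Hasse's computation of the different of an Artin--Schreier extension. The $p$-rank formula will come from the Deuring--Shafarevich formula. Throughout I work over $k=\Fpb$. This is harmless: genus and $p$-rank are geometric invariants, and the reduced form of $f$ is preserved under base change to $k$.

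\emph{Step 1: ramification.} First I identify the branch locus. At a point $Q\in\PP$ that is not a pole of $f$, the function $f$ is integral at $Q$. Then $T^p-T-f$ is separable modulo $Q$, since its derivative is $-1$, so $Q$ is unramified. At a pole $P_i$ with $p\nmid d_i$, I take a local uniformizer $t$ with $f=u\,t^{-d_i}$, where $u$ is a unit. Let $v$ be the valuation of a place of $X$ above $P_i$, normalized on $k(X)$, and let $e$ be the ramification index. Then $p\,v(y)=v(f)=-e\,d_i$. Since $e\in\{1,p\}$ and $p\nmid d_i$, this forces $e=p$, so $P_i$ is totally ramified. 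Moreover $v(y)=-d_i$. The integers $d_i$ and $p$ are coprime, so I can choose integers $a,b$ with $a d_i+bp=1$; then $y^{-a}t^{b}$ is a local uniformizer $\Pi$ of $X$ at the point above $P_i$.

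\emph{Step 2: the different.} The generator $\sigma\colon y\mapsto y+1$ of the inertia group satisfies $\sigma(y)-y=1$, and $v(y)=-d_i$. From this I compute $v(\sigma\Pi-\Pi)=v(\Pi)+d_i$, so the lower ramification jump is exactly $d_i$. Every nontrivial element of $G=C_p$ has the same jump. Hilbert's different formula then gives the different exponent at the point above $P_i$:
\[
\sum_{j\ge0}(|G_j|-1)=(p-1)(d_i+1).
\]
Riemann--Hurwitz now reads
\[
2g(X)-2=p(0-2)+\sum_i (p-1)(d_i+1).
\]
Rewriting $-2p+2=-2(p-1)$ gives
\[
g(X)=\frac{p-1}{2}\Bigl(-2+\sum_i(d_i+1)\Bigr).
\]
This step is the main technical point of the proof, namely the exact computation of the jump. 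I expect the uniformizer computation above to handle it cleanly. Alternatively, one can cite the standard result (Stichtenoth, Prop.~3.7.8), which states exactly this for reduced $f$.

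\emph{Step 3: $p$-rank.} The Deuring--Shafarevich formula applies to a Galois $p$-cover $X\to Y$ with group $G$:
\[
\gamma(X)-1=|G|(\gamma(Y)-1)+\sum_{Q\in X}(e_Q-1).
\]
Here $Y=\PP$, so $\gamma(Y)=0$. There are exactly $s$ ramified points, each with $e_Q=p$. Hence
\[
\gamma(X)-1=-p+s(p-1),
\]
that is, $\gamma(X)=(s-1)(p-1)$. This completes the proof of Proposition~\ref{prop:formulas}.
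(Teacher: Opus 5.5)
Your proof is correct and follows the same route as the paper: Riemann--Hurwitz with different exponent $(p-1)(d_i+1)$ at each pole, then Deuring--Shafarevich with $s$ totally ramified points over $\PP$. The only difference is that you derive the lower jump $d_i$ directly from the uniformizer $y^{-a}t^{b}$, which is valid because $ad_i\equiv1\pmod p$ forces $p\nmid a$; the paper instead cites Stichtenoth, Proposition~3.7.8, for the different.
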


\begin{proof}
The different exponent above a pole of order $d_i$ is $(p-1)(d_i+1)$ \cite[Proposition~3.7.8]{Stichtenoth}, so the genus formula follows from Riemann--Hurwitz. The $p$-rank formula is the Deuring--Shafarevich formula; see \cite{Subrao} and \cite[Lemma~2.6]{PriesZhu}.
\end{proof}

Throughout, we write
\[
 N:=-2+\sum_{i=1}^s(d_i+1),\qquad r:=s-1,
\]
so that $g(X)=\frac{p-1}2N$ and $\gamma(X)=r(p-1)$. The deck group
\[
 T=\{\tau_c:(x,y)\mapsto(x,y+c):c\in\Fp\}
\]
is cyclic of order $p$.

\paragraph{Convention.} An automorphism $\sigma$ of $X$ is identified with the corresponding automorphism of $k(X)$. If $\sigma$ preserves $k(x)$, then $\sigma(x)=M(x)$ for some $M\in\PGL_2(k)$. The induced map on the points of $\PP$ is $M$ or $M^{-1}$, depending on whether one lets automorphisms act by pull-back or by push-forward. All our arguments only use that this point map permutes the branch points of $X\to\PP$ and preserves their lower jumps, and these properties are insensitive to inversion.

\begin{proposition}[Valentini--Madan]\label{prop:VM}
Let $X/k$ be an Artin--Schreier curve. Assume that $X$ is not birational over $k$ to a curve in one of the three families \eqref{eq:exceptional-families}. Then the rational subfield $k(x)\subset k(X)$ of index $p$ is unique. Consequently every $\sigma\in\Aut_k(X)$ preserves $k(x)$ and induces an element $M_\sigma\in\PGL_2(k)$ on the $x$-line.
\end{proposition}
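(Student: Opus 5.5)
The statement is essentially the theorem of Valentini and Madan \cite{VM80}, in the form recorded in \cite[Corollary~2.16]{DGLMS}. The plan is therefore to quote the uniqueness assertion from the literature and derive the ``consequently'' part by an elementary argument. For the first step we use the main result of \cite{VM80}. It says that if $k(X)$ contains two distinct rational subfields $k(x)$ and $k(x')$, each of index $p$ and each with $k(X)/k(x')$ Artin--Schreier, then the equation can be brought into one of the three normal forms \eqref{eq:exceptional-families}. Any degree-$p$ rational subfield $L=k(t)\subset k(X)$ is automatically Galois of Artin--Schreier type once it is the image of $k(x)$ under an automorphism. In the application we only need subfields of the form $\sigma(k(x))$, so we need not worry about non-Galois rational subfields of index $p$.

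For the consequence, fix $\sigma\in\Aut_k(X)$ and view it as an automorphism of $k(X)$ over $k$. Then $\sigma(k(x))=k(\sigma(x))$ is a rational subfield of $k(X)$ of index $[k(X):\sigma(k(x))]=[k(X):k(x)]=p$. It is Galois over its base with group $\sigma T\sigma^{-1}\cong C_p$, so $k(X)/k(\sigma(x))$ is an Artin--Schreier extension. By the uniqueness, $k(\sigma(x))=k(x)$. Hence $\sigma$ restricts to a $k$-automorphism of $k(x)\cong k(\PP)$. Every such automorphism is of the form $x\mapsto (ax+b)/(cx+d)$ with $ad-bc\ne0$, which gives $M_\sigma\in\PGL_2(k)$. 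The map $\sigma\mapsto M_\sigma$ is a (anti-)homomorphism whose kernel is $\operatorname{Gal}(k(X)/k(x))=T$; this agrees with the stated Convention about pull-back versus push-forward.

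The main obstacle is purely bibliographic, and we do not reprove it. One must check that the hypothesis in \cite{VM80} is ``not birational to'' the exceptional models, rather than ``not equal to'' them, and that the three families in \eqref{eq:exceptional-families} are exactly the exceptional cases. For this we rely on the modern reformulation \cite[Corollary~2.16]{DGLMS}, which lists precisely these families up to $k$-isomorphism. Since birational smooth projective curves are isomorphic, the birationality hypothesis matches.
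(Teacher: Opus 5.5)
Your proposal is correct and follows the paper's proof. In both, uniqueness is quoted from Valentini--Madan (via the DGLMS reformulation), and the consequence comes from applying it to $\sigma(k(x))=k(\sigma(x))$, which is again a rational subfield of index $p$. Your extra remark that $\sigma(k(x))$ is the fixed field of $\sigma T\sigma^{-1}$, hence Galois of Artin--Schreier type, is a sensible sharpening: only uniqueness among such Galois subfields is needed, and for small genus non-Galois rational subfields of index $p$ can exist (e.g.\ on genus-$2$ curves when $p=3$).
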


\begin{proof}
The uniqueness statement is \cite[Theorem~6]{VM80}; see also \cite[Corollary~2.16]{DGLMS}. The final assertion follows because $\sigma(k(x))$ is again a rational subfield of index $p$.
\end{proof}

\begin{lemma}[Artin--Schreier generators]\label{lem:AS-generators}
Let $K=k(x,y)$ with $y^p-y=f\in k(x)$ and $[K:k(x)]=p$. If $z\in K$ satisfies $K=k(x,z)$ and $z^p-z\in k(x)$, then $z=uy+h$ with $u\in\Fp^\times$ and $h\in k(x)$.
\end{lemma}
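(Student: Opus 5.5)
The plan is to use Kummer-type theory for Artin--Schreier extensions: the $\Fp$-span of $y$ and $k(x)$ is governed by the map $\wp(t)=t^p-t$. Write $L=k(x)$ and $w=z^p-z\in L$. Since $K/L$ is cyclic of degree $p$ with group $T=\{\tau_c\}$, I first compute how $T$ acts on $z$. Because $\tau_c$ fixes $L$, we get $\wp(\tau_c z)=\tau_c(w)=w=\wp(z)$. Hence $\wp(\tau_c z - z)=0$, so $\tau_c z = z + \chi(c)$ with $\chi(c)\in\Fp$.

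The map $c\mapsto\chi(c)$ is a homomorphism $\Fp\to\Fp$, because $\tau_{c+c'}=\tau_c\tau_{c'}$ and the $\chi$-values are fixed by $T$. It is nonzero: if $\chi=0$, then $z$ would be fixed by $T$, so $z\in L$ and $k(x,z)=L\ne K$, contradicting $[K:L]=p$. Therefore $\chi(c)=uc$ for a unique $u\in\Fp^\times$. Since $\tau_c(y)=y+c$, the element $h:=z-uy$ satisfies $\tau_c(h)=h+uc-uc=h$ for all $c$. So $h$ is fixed by $\mathrm{Gal}(K/L)=T$, which gives $h\in L=k(x)$ and hence $z=uy+h$.

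The only point needing care is that $T$ is the full Galois group of $K/L$. This holds because $K=L(y)$ is an Artin--Schreier extension of degree $p$, which is Galois with group $\{y\mapsto y+c\}$. Everything else is formal, so I expect no real obstacle. As a byproduct, $w=u^p f+h^p-h=uf+\wp(h)$, which is the form later used for the coboundary equation.
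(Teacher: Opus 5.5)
Your proposal is correct and is essentially the paper's argument: the Galois conjugates of $z$ differ from $z$ by elements of $\Fp$, this difference is nonzero because $z\notin k(x)$, and $z-uy$ is then Galois-invariant. The only difference is that the paper works with the single generator $\tau\colon y\mapsto y+1$ instead of the homomorphism $c\mapsto\chi(c)$ on the whole group; this is cosmetic.
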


\begin{proof}
Let $\tau$ be the generator $y\mapsto y+1$ of $\operatorname{Gal}(K/k(x))$. Since $(\tau z-z)^p-(\tau z-z)=\tau(z^p-z)-(z^p-z)=0$, we have $\tau z-z=u\in\Fp$, and $u\ne0$ because $z\notin k(x)$. Then $\tau(z-uy)=z-uy$, so $h:=z-uy\in k(x)$.
\end{proof}

\begin{lemma}[Form of an automorphism]\label{lem:lift}
Under the hypotheses of Proposition \ref{prop:VM}, every automorphism of $X$ is of the form
\[
 (x,y)\longmapsto\left(M(x),u y+h(x)\right),
\]
where $M\in\PGL_2(k)$, $u\in\Fp^\times$, and $h(x)\in k(x)$. These data satisfy
\begin{equation}\label{eq:lift}
 f(M(x))-u f(x)=h(x)^p-h(x).
\end{equation}
If $M=1$, then the automorphism belongs to the deck group $T$. Conversely, for any Artin--Schreier curve $X$ \textup{(}exceptional or not\textup{)}, every triple $(M,u,h)$ satisfying \eqref{eq:lift} defines an automorphism $(x,y)\mapsto(M(x),uy+h(x))$ of $X$.
\end{lemma}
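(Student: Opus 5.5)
By Proposition~\ref{prop:VM}, an automorphism $\sigma$ of $X$ preserves $k(x)$, so $\sigma(x)=M(x)$ for some $M\in\PGL_2(k)$. The plan is to identify $\sigma(y)$ with Lemma~\ref{lem:AS-generators} and then read off \eqref{eq:lift} by applying $\sigma$ to the defining equation.

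\textbf{Step 1: the image of $y$.} Put $z:=\sigma(y)$. Then $K=k(X)=\sigma(K)=k(\sigma(x),z)=k(x,z)$, since $k(M(x))=k(x)$. Applying $\sigma$ to $y^p-y=f(x)$ gives
\[
 z^p-z=f(M(x))\in k(x).
\]
Lemma~\ref{lem:AS-generators} now yields $z=uy+h$ with $u\in\Fp^\times$ and $h\in k(x)$.

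\textbf{Step 2: the equation \eqref{eq:lift}.} Because $u^p=u$, we get
\[
 z^p-z=u(y^p-y)+h^p-h=uf(x)+h^p-h.
\]
Comparing with Step 1 gives \eqref{eq:lift}.

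\textbf{Step 3: the case $M=1$.} Then $f-uf=h^p-h$, that is, $(1-u)f=h^p-h$. We argue according to $u$.
\begin{itemize}
\item If $u\ne1$, then $(1-u)f$ is a nonzero $\Fp$-multiple of $f$. The equation $y^p-y=f$ defines a nontrivial extension, and multiplying by a scalar in $\Fp^\times$ preserves this (replace $y$ by $(1-u)y$). So $(1-u)f$ is not of the form $h^p-h$, which is a contradiction. Alternatively, one can argue by poles: the reduced $f$ has a pole of order prime to $p$, while every pole of $h^p-h$ has order divisible by $p$.
\item Hence $u=1$ and $h^p-h=0$, so $h\in\Fp$. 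Therefore $\sigma=\tau_h\in T$.
\end{itemize}

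\textbf{Step 4: the converse.} Let $(M,u,h)$ satisfy \eqref{eq:lift}. Set $x'=M(x)$ and $y'=uy+h$. Then
\[
 y'^p-y'=uf(x)+h^p-h=f(x')=f(M(x)).
\]
So the assignment $x\mapsto x'$, $y\mapsto y'$ defines a $k$-homomorphism $K\to K$, since it respects the single defining relation of $K=k(x)[y]/(y^p-y-f)$ over $k(x)$ (the map $x\mapsto M(x)$ is an automorphism of $k(x)$). Its image contains $k(x)$ and $y=(y'-h)/u$, so it is surjective. A surjective field endomorphism of $K$ is an automorphism, and this gives an automorphism of the smooth projective curve $X$.

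None of these steps is a real obstacle; the only point needing care is the nontriviality argument in Step 3.
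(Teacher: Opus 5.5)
Your proof is correct and follows the paper's argument: apply Lemma~\ref{lem:AS-generators} to $z=\sigma(y)$ and substitute into the defining equation, using $u^p=u$. The only differences are minor. For $M=1$ you derive $u=1$ and $h\in\Fp$ by hand, whereas the paper simply notes that $\sigma$ then fixes $k(x)$ and so lies in $\operatorname{Gal}(k(X)/k(x))=T$; and you write out the converse, which the paper calls immediate.
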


\begin{proof}
By Proposition~\ref{prop:VM}, $\sigma(x)=M(x)$ with $M\in\PGL_2(k)$. The element $z=\sigma(y)$ satisfies $k(X)=k(x,z)$ and $z^p-z=\sigma(f(x))=f(M(x))\in k(x)$, so Lemma~\ref{lem:AS-generators} gives $z=uy+h$. Substituting into the Artin--Schreier equation gives \eqref{eq:lift}, using $u^p=u$. If $M=1$, then $\sigma$ fixes $k(x)$ and therefore lies in $\operatorname{Gal}(k(X)/k(x))=T$. The converse is immediate. See also \cite[Lemma~2.15 and Corollary~2.16]{DGLMS}.
\end{proof}

\begin{lemma}[Preservation of lower jumps]\label{lem:jumps}
Let $\sigma\in\Aut_k(X)$ induce $M\in\PGL_2(k)$ as above. Then the induced point map on $\PP$ preserves the branch locus of $X\to\PP$ together with the lower ramification jump attached to each branch point.
\end{lemma}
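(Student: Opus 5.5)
The argument is local at each point of the $x$-line. Let $\sigma\in\Aut_k(X)$ induce $M\in\PGL_2(k)$, so $\sigma(k(x))=k(x)$. Then $\sigma$ normalizes $T=\operatorname{Gal}(k(X)/k(x))$, because $T$ is exactly the group of automorphisms fixing $k(x)$. Hence $\sigma$ induces an automorphism of the quotient $X/T\cong\PP$, and this is the point map of the Convention, namely $M$ or $M^{-1}$.

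\textbf{Branch locus.} A point $Q\in\PP$ is a branch point exactly when some point $\tilde Q$ of $X$ over $Q$ has nontrivial inertia group $T_{\tilde Q}\le T$. For the image point we have $T_{\sigma(\tilde Q)}=\sigma T_{\tilde Q}\sigma^{-1}$, and this group lies in $T$ since $\sigma$ normalizes $T$. So inertia at $\tilde Q$ is nontrivial if and only if inertia at $\sigma(\tilde Q)$ is, and the point map sends branch points to branch points. It is a bijection of the finite branch set, because the same argument applies to $\sigma^{-1}$.

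\textbf{Lower jumps.} At a branch point the inertia group is all of $T\cong C_p$, since the ramification is total. The lower ramification groups are intrinsic:
\[
T_i(\tilde Q)=\{\tau\in T:\ v_{\tilde Q}(\tau(\pi)-\pi)\ge i+1\},
\]
where $\pi$ is a uniformizer at $\tilde Q$. Now $\sigma(\pi)$ is a uniformizer at the image point, and $v_{\sigma\tilde Q}(\sigma\tau\sigma^{-1}(\sigma\pi)-\sigma\pi)=v_{\tilde Q}(\tau\pi-\pi)$. It follows that $\sigma T_i(\tilde Q)\sigma^{-1}=T_i(\sigma\tilde Q)$. Conjugation by $\sigma$ maps $T$ onto itself, so $T_i(\tilde Q)=T$ if and only if $T_i(\sigma\tilde Q)=T$. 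The lower jump is the largest $i$ with $T_i=T$, so the two lower jumps agree.

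\textbf{Link with pole orders.} It remains to identify the jump with the reduced pole order $d$, which is how the lemma will be used. Standard Artin--Schreier theory \cite[Proposition~3.7.8]{Stichtenoth} says that at a pole of order $d$ prime to $p$ of a reduced $f$, the lower jump equals $d$. Equivalently, the different exponent $(p-1)(d+1)$ equals $\sum_{i\ge0}(|T_i|-1)$.

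\textbf{Main obstacle.} The only delicate point is that the conjugated inertia group stays inside $T$, which is why descent of $\sigma$ to $k(x)$ is needed. This is exactly what we assume: it comes from Proposition~\ref{prop:VM}, or is given directly by $M$. The pull-back versus push-forward ambiguity causes no trouble, because $M$ and $M^{-1}$ both preserve the jumps.
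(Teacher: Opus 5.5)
Your proof is correct and follows the paper's own argument. In both, $\sigma$ normalizes $T$, conjugation transports the lower ramification filtration via $v_{\sigma(Q)}(\sigma(z))=v_Q(z)$ so that the jump index is preserved, and the standard Artin--Schreier computation identifies the jump with the reduced pole order; your explicit inertia-group argument for the branch locus only makes precise a step the paper leaves implicit.
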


\begin{proof}
By Proposition \ref{prop:VM}, $\sigma$ preserves the unique rational subfield $k(x)$. Hence it normalizes its Galois group: $\sigma T\sigma^{-1}=T$. Let $Q$ lie above a branch point $P$. Conjugation by $\sigma$ carries the lower ramification filtration of $T$ at $Q$ to the lower ramification filtration of $T$ at $\sigma(Q)$:
\[
 \sigma T_i(Q)\sigma^{-1}=T_i(\sigma(Q))\qquad(i\ge0).
\]
This follows directly from the valuation definition of the lower ramification groups and the identity $v_{\sigma(Q)}(\sigma(z))=v_Q(z)$. Consequently the last nontrivial lower ramification group occurs at the same index. For a reduced Artin--Schreier equation, a pole of order $d$ prime to $p$ has lower jump $d$; see \cite[Chapter~IV]{Serre} or \cite[Proposition~3.7.8]{Stichtenoth}.
\end{proof}

\subsection{Exclusion of the exceptional families}

\begin{lemma}[Invariants of the exceptional families]\label{lem:exceptional-invariants}
In the notation $(r,N)$ above, the curves in \eqref{eq:exceptional-families} have the following branch data:
\begin{enumerate}[label=\textup{(\arabic*)}]
\item $y^p-y=a/(x^p-x)$: $p$ simple poles, so $(r,N)=(p-1,2p-2)$ and $g=\gamma=(p-1)^2$; in particular the curve is ordinary;
\item $y^p-y=x^{-\lambda}$, $\lambda\mid p+1$: one pole of order $\lambda$, so $(r,N)=(0,\lambda-1)$;
\item $y^3-y=i/(x(x-1))$: $p=3$ and two simple poles, so $(r,N)=(1,2)$.
\end{enumerate}
Consequently, if $X$ is an Artin--Schreier curve whose pair $(r,N)$ is not one of $(p-1,2p-2)$, $(0,\lambda-1)$ with $\lambda\mid p+1$, or $(1,2)$, then Proposition~\ref{prop:VM} applies to $X$.
\end{lemma}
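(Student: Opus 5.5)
The plan is to read off the pole data of each of the three families directly, apply Proposition~\ref{prop:formulas}, and then deduce the final assertion by contraposition using the birational invariance of $g$ and $\gamma$. Each defining function is already Artin--Schreier reduced, since every pole order is prime to $p$, so the proposition applies as stated.

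\emph{Family (1).} The function $a/(x^p-x)$ has a simple pole at each $x=c$ with $c\in\Fp$, because $x^p-x=\prod_{c\in\Fp}(x-c)$ has simple roots. At infinity it vanishes to order $p$, so there is no pole there. Thus $s=p$ and $d_i=1$ for all $i$, which gives $N=-2+2p$ and $r=p-1$. Hence $g=\frac{p-1}2(2p-2)=(p-1)^2=\gamma$, and the curve is ordinary.

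\emph{Family (2).} The function $x^{-\lambda}$ has its only pole at $x=0$, of order $\lambda$. Since $\lambda\mid p+1$ and $\gcd(p,p+1)=1$, we have $p\nmid\lambda$. So $s=1$ and $N=\lambda-1$.

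\emph{Family (3).} Here $p=3$. The function $i/(x(x-1))$ has simple poles at $0$ and $1$ and a zero of order $2$ at infinity. So $s=2$ and $N=-2+4=2$.

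For the consequence, the genus and $p$-rank of the smooth projective model are birational invariants. The pair $(r,N)$ is determined by them through $g=\frac{p-1}2N$ and $\gamma=r(p-1)$, with $p-1\ne0$. Therefore $(r,N)$ is the same for any two birational Artin--Schreier curves. If $X$ were birational to a member of a family, its pair would be one of the listed values. By contraposition, Proposition~\ref{prop:VM} applies whenever $(r,N)$ avoids them.

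There is essentially no obstacle. The only point needing care is the remark that $(r,N)$ is intrinsic rather than dependent on the chosen Artin--Schreier model, and this follows from its expression in terms of $g$ and $\gamma$.
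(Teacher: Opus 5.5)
Your proposal is correct and is essentially the paper's own proof: apply Proposition~\ref{prop:formulas} to each displayed equation, then use that $(r,N)$ is determined by the birational invariants $g$ and $\gamma$ to conclude by contraposition. Your phrase that the functions are reduced ``since every pole order is prime to $p$'' is loose in general, but it is harmless here: each principal part is a single term, and Proposition~\ref{prop:formulas} only needs $p\nmid d_i$.
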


\begin{proof}
This is Proposition~\ref{prop:formulas} applied to the displayed equations, together with the fact that $g$ and $\gamma$, hence $(r,N)$, are birational invariants.
\end{proof}

The following elementary fact is used repeatedly.

\begin{lemma}[Reduced polynomials]\label{lem:reduced-unique}
Let $Q\in k[x]$. If $Q^p-Q$ contains no monomial $x^{jp}$ with $j\ge1$, then $Q$ is constant. In particular, two reduced polynomials that differ by an element of $\{Q^p-Q:Q\in k[x]\}$ differ by a constant.
\end{lemma}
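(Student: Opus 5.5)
We prove the first assertion by contradiction, looking at the largest exponent divisible by $p$ that occurs in $Q^p-Q$. Suppose $Q$ is nonconstant, so $e:=\deg Q\ge1$ and $Q=\sum_{i=0}^{e}q_ix^i$ with $q_e\ne0$. Since we are in characteristic $p$, the Frobenius is additive, so
\[
 Q^p=\sum_{i=0}^{e}q_i^p x^{ip}.
\]
In particular $Q^p$ has degree $ep$, and its leading monomial is $q_e^px^{ep}$.

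We then check that subtracting $Q$ cannot cancel this leading term. As $e\ge1$ and $p\ge2$, we have $\deg Q=e<ep$, so $Q$ has no monomial of degree $ep$. Hence the coefficient of $x^{ep}$ in $Q^p-Q$ equals $q_e^p\ne0$. This is a monomial $x^{jp}$ with $j=e\ge1$, contradicting the hypothesis. Therefore $Q$ must be constant.

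For the second assertion, let $f_1,f_2$ be reduced polynomials with $f_1-f_2=Q^p-Q$ for some $Q\in k[x]$. By the definition of reduced, neither $f_1$ nor $f_2$ contains a monomial $x^{jp}$ with $j\ge1$. Hence their difference contains no such monomial either, since the coefficient of each such $x^{jp}$ in $f_1-f_2$ is $0-0$. By the first assertion $Q$ is a constant $c\in k$. Then $f_1-f_2=c^p-c$, which is again a constant, and this proves the claim.

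There is no real obstacle here; the only point needing care is that the top term $q_e^px^{ep}$ survives. This holds because $\deg Q<\deg Q^p$ whenever $\deg Q\ge1$.
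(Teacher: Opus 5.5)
Your proof is correct and follows the paper's argument: the coefficient of $x^{pe}$ in $Q^p-Q$ is $q_e^p\ne0$ because $pe>e$. Your deduction of the second assertion from the definition of a reduced polynomial supplies exactly the step the paper leaves implicit.
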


\begin{proof}
If $e=\deg Q\ge1$, the coefficient of $x^{pe}$ in $Q^p-Q$ is the $p$-th power of the leading coefficient of $Q$, because $pe>e$.
\end{proof}

\begin{lemma}[Rigidity of the ordinary exceptional family]\label{lem:E-rigid}
Let $a\in k^\times$, let $E_a$ be the curve $y^p-y=a/(x^p-x)$, and let $T_E$ be its deck group.
\begin{enumerate}[label=\textup{(\alph*)}]
\item Every subgroup $P\le\Aut_k(E_a)$ of order $p$ with $E_a/P\cong\PP$ is conjugate to $T_E$ in $\Aut_k(E_a)$.
\item Let $X:y^p-y=f(x)$ be a reduced Artin--Schreier curve birational to $E_a$. Then there are $M\in\PGL_2(k)$, $u\in\Fp^\times$ and a constant $h\in k$ such that
\begin{equation}\label{eq:E-transfer}
 \frac{a}{M(x)^p-M(x)}=u\,f(x)+h.
\end{equation}
In particular, all poles of $f$ are simple, and the branch locus of $X\to\PP$ is projectively equivalent to $\mathbb F_p\subset\PP$; hence it has a nontrivial stabilizer in $\PGL_2(k)$ \textup{(}a conjugate of $x\mapsto x+1$\textup{)}.
\item Suppose $p=3$ and $X$ is as in \textup{(b)}. Order the three branch points of $X\to\PP$, let $z$ be the coordinate sending them to $\infty,0,1$, and write $f=\alpha z+\beta/z+\gamma/(z-1)+\delta$. Then $(\alpha,\beta,\gamma)=(-s,s,s)$ for some $s\in k^\times$.
\end{enumerate}
\end{lemma}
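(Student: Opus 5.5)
For (a), we use Nakajima's bound on the $p$-part of the automorphism group of an ordinary curve. The curve $E_a$ is ordinary of genus $g=(p-1)^2\ge2$ by Lemma~\ref{lem:exceptional-invariants}(1). Nakajima \cite[Theorem~1]{Nakajima} shows that a $p$-subgroup $S$ of $\Aut_k$ of an ordinary curve of genus $g\ge2$ satisfies $|S|\le p\,(g-1)/(p-2)$ for $p$ odd; the bound is at most $p^2$ in our range (for $p=3$ one must check it separately, where the bound reads $3(g-1)=9$). On the other hand, $E_a$ carries the $p$-group $S_0$ of order $p^2$ generated by $T_E$ and $(x,y)\mapsto(x+1,y)$. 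These commute, and the second preserves $x^p-x$. Hence $S_0$ is a Sylow $p$-subgroup of $\Aut_k(E_a)$.

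Now let $P$ be as in (a). By Sylow, $P$ is conjugate into $S_0\cong C_p\times C_p$. The order-$p$ subgroups of $S_0$ are $T_E$ and those generated by $\rho_j:(x,y)\mapsto(x+1,y+j)$. For the latter, the map $E_a\to E_a/\langle\rho_j\rangle$ is unramified: $\rho_j$ has no fixed points since $x\mapsto x+1$ fixes only $\infty$, and above $\infty$ the function $f$ vanishes, so $E_a$ is unramified there and the $p$ points $y\in\Fp$ are permuted, because $\rho_j$ acts on them by $y\mapsto y+j$ together with a nontrivial action coming from the $x$-shift. So I would argue that an étale quotient of genus $(p-1)^2$ has genus $((p-1)^2-1)/p+1\ne0$. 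Equivalently, Riemann--Hurwitz shows the quotient cannot be $\PP$ unless there is ramification, and the fixed points of $\rho_j$ must be checked to be empty. This identifies $P$ with a conjugate of $T_E$. I expect this to be the main obstacle: the precise form of Nakajima's bound, and ensuring that $S_0$ is really Sylow when $p=3$.

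For (b), fix a birational map $\phi:X\to E_a$. It transports the deck group $T_X$ to a subgroup $P=\phi T_X\phi^{-1}$ of order $p$ with quotient $\PP$. By (a) there is $\psi\in\Aut(E_a)$ with $\psi P\psi^{-1}=T_E$. Replacing $\phi$ by $\psi\phi$, we get $\phi(k(E_a)^{T_E})=k(x)$, so $\phi^*$ sends the $x$-coordinate of $E_a$ to $M(x)$ for some $M\in\PGL_2(k)$. Then $\phi^*(y_E)$ generates $k(X)$ over $k(x)$ and satisfies $z^p-z\in k(x)$. Lemma~\ref{lem:AS-generators} gives $\phi^*y_E=uy+h$ with $h\in k(x)$, hence
\[
 a/(M^p-M)=uf+h^p-h.
\]
The left side is reduced, since it has simple poles, and so is $uf$. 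Therefore $h^p-h$ is reduced and has no poles of order divisible by $p$, which forces $h$ to have no poles, so $h$ is constant. This gives \eqref{eq:E-transfer}. The branch locus is then $M^{-1}(\Fp)$, whose stabilizer contains $M^{-1}(x+1)M$.

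For (c), take $p=3$ and compose $M$ with the affine map sending the ordered triple $(\infty,0,1)$ to $M$ of the branch points. We may then write $M(z)$ as a Möbius map sending $\{\infty,0,1\}$ onto $\Fp\cup\{\}$-type images, but in a fixed ordering. The point is that $M$ is a bijection $\{\infty,0,1\}\to\{0,1,2\}$, with $\{0,1,2\}$ being the poles of $a/(x^3-x)$. Compute the residues: $a/(x^3-x)$ has principal part $-a/x$ at each $x=c\in\mathbb F_3$, because the derivative of $x^3-x$ equals $-1$. Pulling back by $M$ with $M(z_0)=c$ gives principal part $-a/(M'(z_0)(z-z_0))$, and at $z=\infty$ it gives a linear term. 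The resulting coefficients $\alpha,\beta,\gamma$ follow from the derivatives of $M$ at the three points. For any Möbius map onto $\mathbb F_3$, a direct check over the six bijections yields $(\alpha,\beta,\gamma)$ proportional to $(-1,1,1)$, and dividing by $u$ gives $s$. This is a finite computation that I would do by reducing, via the symmetry $x\mapsto x+1$, to two cases.
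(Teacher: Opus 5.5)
\textbf{Verdict: genuine gap in part (a).} Your proof of (b) follows the paper. In (c), reading $(\alpha,\beta,\gamma)$ off the derivatives of $\Lambda=M\circ\Phi$ at the three points is a valid alternative to the paper's normalization by $\Lambda_0(z)=1/(z+1)$ and the $S_3$-table. The symmetries $x\mapsto x+1$ (which fixes $a/(x^3-x)$) and $x\mapsto -x$ (which negates it) reduce the six bijections to a single computation. That computation gives $(-a,a,a)$.

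The problem is your list of order-$p$ subgroups of $S_0$. They are $T_E$ and $\langle\rho_j\rangle$ for $j\in\Fp$, and for $j=0$ your claim that $\rho_j$ acts without fixed points is false. The map $\rho_0=\rho\colon(x,y)\mapsto(x+1,y)$ fixes $\infty$ on the $x$-line. The $p$ points of $E_a$ over $x=\infty$ are distinguished only by the value $y\in\Fp$, and $\rho$ does not change $y$. There is no extra ``action coming from the $x$-shift'' on these points, so $\rho$ fixes all $p$ of them. In fact $E_a/\langle\rho\rangle$ is the $y$-line, hence isomorphic to $\PP$. So $R=\langle\rho\rangle$ is a second candidate for $P$ up to conjugacy, and your argument never shows it is conjugate to $T_E$. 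As written, (a) is not proved.

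The missing idea is a symmetry of $E_a$. Its equation can be written $(x^p-x)(y^p-y)=a$, so $\iota\colon(x,y)\mapsto(y,x)$ is an automorphism, and $\iota\rho\iota^{-1}=\tau$, i.e.\ $\iota R\iota^{-1}=T_E$. With this the argument closes:
\begin{itemize}
\item By Sylow, any $P$ with rational quotient is conjugate into $S_0$.
\item The subgroups $\langle\rho\tau^c\rangle$ with $c\neq0$ are excluded by your own correct computation: they act freely, so Riemann--Hurwitz gives quotient genus $p-1>0$.
\item That leaves $T_E$ and $R$, and $R$ is conjugate to $T_E$ via $\iota$.
\end{itemize}

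Your worry about $p=3$ is unfounded. Since $(p-1)^2-1=p(p-2)$, Nakajima's bound $\frac{p}{p-2}(g-1)$ equals $p^2$ exactly for every odd $p$. So $S_0$ is a Sylow $p$-subgroup uniformly, with no separate check needed.
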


\begin{proof}
(a) The curve $E_a$ is ordinary of genus $(p-1)^2\ge4$ by Lemma~\ref{lem:exceptional-invariants}. By Nakajima's bound \cite[Theorem~1]{Nakajima} (see also \cite[Theorem~11.84]{HKT}), every $p$-subgroup of $\Aut_k(E_a)$ has order at most
\[
 \frac{p}{p-2}\bigl(\gamma(E_a)-1\bigr)=\frac{p}{p-2}\bigl((p-1)^2-1\bigr)=p^2.
\]
The automorphisms $\tau:(x,y)\mapsto(x,y+1)$ and $\rho:(x,y)\mapsto(x+1,y)$ generate a subgroup $S\cong C_p\times C_p$, which is therefore a $p$-Sylow subgroup. Its subgroups of order $p$ are $T_E=\langle\tau\rangle$, $R=\langle\rho\rangle$, and $\langle\rho\tau^{c}\rangle$ with $c\in\Fp^\times$.

The element $\rho\tau^c$ ($c\ne0$) acts without fixed points: it moves every point lying over a finite value of $x$, and over $x=\infty$ (which is not a branch point, $f$ vanishing there) it permutes the $p$ points, on which $y$ takes the values of $\Fp$, by $y\mapsto y\pm c$. By Riemann--Hurwitz the quotient has genus $1+(g(E_a)-1)/p=p-1>0$. The equation of $E_a$ can be written $(x^p-x)(y^p-y)=a$, so $\iota:(x,y)\mapsto(y,x)$ is an automorphism with $\iota R\iota^{-1}=T_E$; in particular $E_a/R\cong\PP$.

Now let $P$ be as in (a). By Sylow's theorem, $g^{-1}Pg\le S$ for some $g\in\Aut_k(E_a)$, and $E_a/g^{-1}Pg\cong E_a/P\cong\PP$. Hence $g^{-1}Pg\in\{T_E,R\}$, and $P$ is conjugate to $T_E$.

(b) Let $\varphi:X\to E_a$ be an isomorphism. The group $\varphi T_X\varphi^{-1}$ has order $p$ and quotient isomorphic to $X/T_X\cong\PP$. By (a), after composing $\varphi$ with an automorphism of $E_a$ we may assume $\varphi T_X\varphi^{-1}=T_E$. Then the induced isomorphism $\varphi^*:k(E_a)\to k(X)$ maps the fixed field $k(x_E)$ of $T_E$ onto the fixed field $k(x)$ of $T_X$, so $\varphi^*(x_E)=M(x)$ for some $M\in\PGL_2(k)$. Moreover $z=\varphi^*(y_E)$ satisfies $k(X)=k(x,z)$ and $z^p-z=a/(M(x)^p-M(x))\in k(x)$. Lemma~\ref{lem:AS-generators} gives $z=uy+h$ with $h\in k(x)$, whence
\[
 \frac{a}{M(x)^p-M(x)}=u\,f(x)+h(x)^p-h(x).
\]
The left side has only simple poles. If $h$ had a pole of order $e\ge1$ at some point, then $h^p-h$ would have a pole of order $pe$ there, which could not be cancelled by $uf$, whose poles have order prime to $p$; the right side would then have a pole of order at least $p\ge3$. Hence $h$ is constant, and \eqref{eq:E-transfer} holds. It follows that the poles of $f$ are exactly the poles of the left side, namely the points mapped into $\mathbb F_p$ by $M$; so the branch locus of $X$ is the image of $\mathbb F_p$ under an element of $\PGL_2(k)$, and $\mathbb F_p$ is preserved by $x\mapsto x+1$.

(c) Let $\Phi\in\PGL_2(k)$ be the coordinate change with $x=\Phi(z)$ sending $\infty,0,1$ to the ordered branch points, so that $f(\Phi(z))=\alpha z+\beta/z+\gamma/(z-1)+\delta$. By (b),
\[
 f(\Phi(z))=u^{-1}f_E(\Lambda(z))+\text{const},\qquad f_E=\frac{a}{x^3-x},\quad \Lambda:=M\circ\Phi .
\]
Comparing poles, $\Lambda$ maps $\{\infty,0,1\}$ bijectively onto $\mathbb F_3$. Put $\Lambda_0(z)=1/(z+1)$, which maps $\infty,0,1$ to $0,1,-1$. Then $S:=\Lambda_0^{-1}\circ\Lambda$ permutes $\{\infty,0,1\}$, so $S$ is one of the six fractional linear maps in \eqref{eq:S3-action} below, and $f_E\circ\Lambda=(f_E\circ\Lambda_0)\circ S$. In characteristic $3$ one computes
\[
 f_E(\Lambda_0(z))=-\frac{a(z+1)^3}{z(z-1)}
 =-az+\frac{a}{z}+\frac{a}{z-1}-a,
\]
whose coefficient triple is $(-a,a,a)$. By \eqref{eq:S3-action}, each of the six substitutions maps the set $\{(-s,s,s):s\in k^\times\}$ to itself, and so does multiplication by $u^{-1}=\pm1$. Hence $(\alpha,\beta,\gamma)$ lies in this set.
\end{proof}

For later reference we record the action used in (c). Writing $F=\alpha z+\beta/z+\gamma/(z-1)$, a direct computation gives, modulo additive constants,
\begin{equation}\label{eq:S3-action}
\begin{array}{c|cccccc}
 S(z) & z & 1-z & 1/z & 1/(1-z) & z/(z-1) & (z-1)/z\\ \hline
 F\circ S & (\alpha,\beta,\gamma) & (-\alpha,-\gamma,-\beta) & (\beta,\alpha,-\gamma) & (-\beta,\gamma,-\alpha) & (\gamma,-\beta,\alpha) & (-\gamma,-\alpha,\beta)
\end{array}
\end{equation}

\begin{lemma}[The monomial exceptional family]\label{lem:monomial}
Let $f\in k[x]$ be a reduced polynomial of degree $d$, with $p\nmid d$ and $g(X_f)\ge2$, where $X_f:y^p-y=f(x)$. Suppose that $X_f$ is birational to $y^p-y=x^{-\lambda}$ for some $\lambda\mid p+1$. Then $\lambda=d$, and:
\begin{enumerate}[label=\textup{(\alph*)}]
\item if $d<p$, then $f=e(x+b)^d+e_0$ with $e\in k^\times$ and $b,e_0\in k$; in particular, if the coefficient of $x^{d-1}$ in $f$ vanishes, then $f-f(0)$ is a monomial;
\item if $d=p+1$, then $f-f(0)=e_1x^{p+1}+e_2x$ for some $e_1,e_2\in k$.
\end{enumerate}
\end{lemma}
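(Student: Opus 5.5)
The plan is to compare invariants first, then put the isomorphism into the Lehr--Matignon normal form, and finally compare reduced polynomials.

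\emph{Step 1: $\lambda=d$.} Since $\lambda\mid p+1$, we have $p\nmid\lambda$, so $x^{-\lambda}$ is already reduced. Its only pole is at $x=0$, of order $\lambda$. The substitution $x\mapsto 1/x$ rewrites the exceptional curve as $X_{x^\lambda}:\ y^p-y=x^\lambda$. Then both $X_f$ and $X_{x^\lambda}$ are one-point covers of $\PP$ branched only at $\infty$. By Proposition~\ref{prop:formulas} their genera are $\frac{p-1}{2}(d-1)$ and $\frac{p-1}{2}(\lambda-1)$. Genus is a birational invariant, so $\lambda=d$. Consequently $d\mid p+1$ and $p\nmid d$, which forces either $d<p$ or $d=p+1$; these are exactly the two cases of the statement.

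\emph{Step 2: normalize the isomorphism.} Let $\varphi:X_f\to X_{x^d}$ be an isomorphism. Because $g(X_f)\ge2$, \cite[Proposition~3.3]{LehrMatignon} applies. After composing $\varphi$ with a suitable automorphism, it descends to an affine map $x\mapsto \alpha x+\beta$ of the base, with $\alpha\ne0$. The same argument as in Lemma~\ref{lem:lift}, using Lemma~\ref{lem:AS-generators}, then gives
\[
 (\alpha x+\beta)^d=u\,f(x)+h(x)^p-h(x),\qquad u\in\Fp^\times,\ h\in k(x).
\]
Both $(\alpha x+\beta)^d$ and $f$ are polynomials. If $h$ had a finite pole of order $e$, then $h^p-h$ would have a pole of order $pe$ there, and nothing else in the equation could cancel it. Hence $h\in k[x]$.

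\emph{Step 3: compare reduced forms.} In case (a), $d<p$, so every exponent in $(\alpha x+\beta)^d$ is less than $p$ and this polynomial is reduced. Lemma~\ref{lem:reduced-unique} then shows $h$ is constant. Therefore $f=u^{-1}(\alpha x+\beta)^d+\text{const}$, which is $e(x+b)^d+e_0$ with $e=u^{-1}\alpha^d$ and $b=\beta/\alpha$. If the coefficient of $x^{d-1}$ vanishes, then $d e b=0$. Since $p\nmid d$ and $e\ne0$, this gives $b=0$, so $f-f(0)$ is a monomial.

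In case (b), $d=p+1$, and we expand
\[
 (\alpha x+\beta)^{p+1}=(\alpha^px^p+\beta^p)(\alpha x+\beta)
 =\alpha^{p+1}x^{p+1}+\alpha^p\beta\,x^p+\alpha\beta^p x+\beta^{p+1}.
\]
Only the $x^p$ term is non-reduced. We remove it by subtracting $Q^p-Q$ with $Q=(\alpha^p\beta)^{1/p}x$; this changes only the linear and $x^p$ coefficients. The result is a reduced polynomial supported on $x^{p+1}$, $x$ and $1$. It differs from $uf$ by an element of $\{Q^p-Q\}$, so by Lemma~\ref{lem:reduced-unique} they differ by a constant. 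This gives $f-f(0)=e_1x^{p+1}+e_2x$.

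The main obstacle is Step 2. I need to check the precise hypotheses of \cite[Proposition~3.3]{LehrMatignon}, in particular its genus and normalization conventions with the pole at $\infty$. I also need to confirm that the modified isomorphism really has the form $(x,y)\mapsto(\alpha x+\beta,\,uy+h)$, so that the Artin--Schreier generator argument applies. Once this is in place, the remaining steps are coefficient bookkeeping.
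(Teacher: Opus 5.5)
Your proposal is correct and follows essentially the same route as the paper: compare genera to get $\lambda=d$, use \cite[Proposition~3.3]{LehrMatignon} to reduce to an affine substitution with $(\alpha x+\beta)^d=u f+h^p-h$, then compare coefficients using Lemma~\ref{lem:reduced-unique}. The only differences are cosmetic. In case~(a) the paper gets $h$ constant from a degree bound where you use reduced-uniqueness. You also check explicitly that $h$ is a polynomial, which the paper takes from the Lehr--Matignon normal form; the paper relies on that citation exactly as you do.
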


\begin{proof}
Comparing genera gives $\lambda=d$. The substitution $x\mapsto1/x$ identifies $y^p-y=x^{-d}$ with $X_{x^d}:y^p-y=x^d$, a $p$-cyclic cover of the affine line. By \cite[Proposition~3.3]{LehrMatignon}, an isomorphism $X_f\to X_{x^d}$ can be modified by an automorphism of $X_f$ so that it descends to an affine map $x\mapsto ax+b$; the resulting data satisfy
\[
 c\,f(x)-(ax+b)^d=P^p-P,\qquad a\in k^\times,\ b\in k,\ c\in\Fp^\times,\ P\in k[x].
\]
(a) If $d<p$, the left side has degree at most $d<p$, so $P$ is constant. Hence $f=c^{-1}(ax+b)^d+\text{const}$. The coefficient of $x^{d-1}$ is $c^{-1}da^{d-1}b$, which vanishes only if $b=0$.

(b) If $d=p+1$, then $(ax+b)^{p+1}=a^{p+1}x^{p+1}+a^pbx^p+ab^px+b^{p+1}$. Choose $\beta$ with $\beta^p=a^pb$; then $a^pbx^p=(\beta x)^p-\beta x+\beta x$, and
\[
 c f(x)-a^{p+1}x^{p+1}-(\beta+ab^p)x-b^{p+1}=(P+\beta x)^p-(P+\beta x).
\]
The left side is reduced, so by Lemma~\ref{lem:reduced-unique} both sides are constant, and the claim follows.
\end{proof}

\begin{remark}\label{rem:DGLMS-alternative}
Lemmas~\ref{lem:E-rigid} and~\ref{lem:monomial} can also be deduced from the standard-form theory of \cite{DGLMS}: Lemma~3.6 there shows that every standard form of $y^p-y=a/(x^p-x)$ arises from the displayed model by a change of variables of the form in Lemma~\ref{lem:lift} (its automorphism group is $((C_p\times C_p)\rtimes C_{p-1})\rtimes C_2$), Lemma~3.8 there shows that the only one-pole standard form of $y^p-y=x^{-\lambda}$ is $y^p-y=x^\lambda$, and Proposition~4.15 there lists the standard forms in the genus-$4$, three-pole stratum for $p=3$. The arguments above avoid the standard-form normalization and use only \cite{Nakajima} and \cite{LehrMatignon}.
\end{remark}

\begin{remark}\label{rem:CS}
A Castelnuovo--Severi argument gives an elementary substitute for Proposition \ref{prop:VM} when $g>(p-1)^2$: two degree-$p$ maps $X\to\PP$ not differing by an automorphism of $\PP$ would map $X$ birationally onto a curve of bidegree $(p,p)$ in $\PP\times\PP$, whose arithmetic genus is $(p-1)^2$; see \cite[Section~3.11]{Stichtenoth}. This is the prime-degree version of \cite[Lemma~5]{PoonenI}. It is not used below, because it misses all genera $g\le(p-1)^2$, including most of the sharp boundary cases.
\end{remark}

\section{One branch point: the sharp \texorpdfstring{$p$}{p}-rank-zero range}\label{sec:one-pole}

If $\gamma(X)=0$, Proposition~\ref{prop:formulas} shows that an Artin--Schreier presentation has exactly one pole. After moving it to $\infty$ and reducing the equation, we may write
\[
 y^p-y=f(x),\qquad f\in k[x],\qquad \deg f=d,
\]
with $p\nmid d$, and then $N=d-1$. Thus $p\nmid N+1$ is necessary. We first isolate the remaining small obstructions.

\begin{proposition}[Small one-pole obstructions]\label{prop:zero-obstructions}
Let $p$ be odd and let $X/k$ be a $p$-rank-zero Artin--Schreier curve with
\[
 g(X)=\frac{p-1}{2}N.
\]
If $N=1$, or if $N=2$ \textup{(}which requires $p\ne3$\textup{)}, or if $(p,N)=(3,3)$, then
\[
 \Aut_k(X)\supsetneq C_p.
\]
\end{proposition}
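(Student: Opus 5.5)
Since $\gamma(X)=0$, by the discussion at the start of this section we may write $X:y^p-y=f(x)$ with $f\in k[x]$ reduced of degree $d=N+1$, $p\nmid d$. The plan is to exhibit in each case an explicit automorphism $(x,y)\mapsto(M(x),uy+h(x))$ with $M\ne1$, using the converse direction of Lemma~\ref{lem:lift}: it suffices to produce a triple $(M,u,h)$ satisfying \eqref{eq:lift} with $M$ nontrivial. Such an automorphism does not fix $k(x)$ pointwise, hence is not in the deck group $T$, so $\Aut_k(X)\supsetneq C_p$. No appeal to Proposition~\ref{prop:VM} is needed, which is convenient since these small cases are exactly where the exceptional families live.

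\emph{Case $N=1$, $d=2$.} Completing the square ($p$ odd) and translating $x$, we get $f=ex^2+c$. Take $M(x)=-x$, $u=1$, $h=0$; then $f(-x)-f(x)=0$, so $(x,y)\mapsto(-x,y)$ is an extra involution. (This is the elliptic-type case of genus $(p-1)/2$.)

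\emph{Case $N=2$, $d=3$, $p\ne3$.} Depress the cubic by translating $x$ (possible since $3$ is invertible), giving $f=ex^3+bx+c$. The substitution $x\mapsto-x$ gives $f(-x)=-ex^3-bx+c=-f(x)+2c$, so $M(x)=-x$, $u=-1$ and a constant $h$ with $h^p-h=2c$ solve \eqref{eq:lift}. Hence $(x,y)\mapsto(-x,-y+h)$ is an automorphism with $M\ne1$.

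\emph{Case $(p,N)=(3,3)$, $d=4$.} Reducedness forbids the $x^3$ term, so $f=ex^4+a_2x^2+a_1x+a_0$ with $e\ne0$. Here the term $a_1x$ spoils the symmetry $x\mapsto -x$. The fix is to look for translations $M(x)=x+t$ with $u=1$: we have $f(x+t)-f(x)=e(4tx^3+6t^2x^2+4t^3x+t^4)+a_2(2tx+t^2)+a_1t$, which in characteristic $3$ equals
\[
 e t x^3+(et^3+2a_2t)x+\text{const}.
\]
We need this to be $h^3-h$ with $h$ a polynomial; try $h=\mu x+\nu$, so $h^3-h=\mu^3x^3-\mu x+(\nu^3-\nu)$. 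Matching coefficients gives $\mu^3=et$ and $-\mu=et^3+2a_2t$, and then $\nu$ absorbs the constant. Substituting $t=\mu^3/e$ yields $-\mu=\mu^9/e^2+2a_2\mu^3/e$, a polynomial equation in $\mu$ which has a nonzero root over $k=\Fpb$ (divide by $\mu$; the remaining polynomial $\mu^8/e^2+2a_2\mu^2/e+1$ has nonzero constant term). A nonzero $\mu$ gives $t\ne0$, hence $M\ne1$.

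The main obstacle is this last case: one has to see that the extra automorphism is a translation with a nontrivial correction term $h$, rather than a symmetry of $f$ itself, and then check solvability of the resulting equation over $k$. The other cases are immediate.
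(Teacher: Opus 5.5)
Your proposal is correct and follows the paper's proof essentially step for step: the same normal forms and extra involutions for $N=1$ and $N=2$, and for $(p,N)=(3,3)$ the same translation $x\mapsto x+t$ with $u=1$ and a linear correction $h=\mu x+\nu$. The only difference is cosmetic: you solve directly for the linear coefficient $\mu$ of $h$ (the paper's $c$) using $t=\mu^3/e$, whereas the paper solves a quartic in $T=B^2$ and then takes a square root; the two equations agree up to Frobenius.
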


\begin{proof}
Move the unique branch point to $\infty$ and use a reduced polynomial equation $y^p-y=f(x)$. By Lemma~\ref{lem:lift} (whose converse direction requires no hypothesis), it suffices to exhibit $(M,u,h)$ with $M\ne1$ satisfying \eqref{eq:lift}.

If $N=1$, then $d=2$. Since $p$ is odd, an affine translation of $x$ and an Artin--Schreier translation of $y$ put the equation in the form
\[
 y^p-y=ax^2,
\qquad a\ne0.
\]
Then $(x,y)\mapsto(-x,y)$ is an involution outside the deck group.

If $N=2$, then $d=3$ and hence $p\ne3$, so $p\ge5$. Translating $x$ removes the quadratic term, and translating $y$ removes the constant term, so over $k$ the equation has the form
\[
 y^p-y=ax^3+bx.
\]
Then $(x,y)\mapsto(-x,-y)$ is an involution outside $C_p$.

It remains to consider $(p,N)=(3,3)$, so $d=4$. In reduced form we may write
\[
 f(x)=a_4x^4+a_2x^2+a_1x+a_0,
\qquad a_4\ne0.
\]
For a translation $x\mapsto x+B$ one has, in characteristic $3$,
\[
 f(x+B)-f(x)
 =a_4B x^3+(a_4B^3-a_2B)x+\text{constant}.
\]
Choose a nonzero root $T\in k$ of
\[
 -a_4^3T^4+a_2^3T-a_4=0;
\]
such a root exists because the polynomial is nonconstant and its constant term is nonzero. Choose $B\in k^\times$ with $B^2=T$ and set
\[
 c=-a_4B^3+a_2B.
\]
Then $c^3=-a_4^3B^9+a_2^3B^3=B(-a_4^3T^4+a_2^3T)=a_4B$. Hence
\[
 f(x+B)-f(x)=c^3x^3-cx+K
\]
for some $K\in k$. Choose $\delta$ with $\delta^3-\delta=K$. Then
\[
 (x,y)\longmapsto(x+B,\,y+cx+\delta)
\]
is an automorphism of $X$. Because $B\ne0$, it is not a deck transformation.
\end{proof}

We next record the criterion that excludes the exceptional families for all one-pole models below.

\begin{lemma}[One-pole non-exceptionality]\label{lem:one-pole-nonexc}
Let $f\in\Fp[x]$ be a reduced polynomial of degree $d=N+1\ge4$ with $p\nmid d$, and assume one of the following:
\begin{enumerate}[label=\textup{(\roman*)}]
\item $d\nmid p+1$;
\item $d<p$, the coefficient of $x^{d-1}$ in $f$ vanishes, and $f-f(0)$ is not a monomial;
\item $d=p+1$ and $f-f(0)\notin k\,x^{p+1}+k\,x$.
\end{enumerate}
Then $X_f:y^p-y=f(x)$ is not birational to any curve in \eqref{eq:exceptional-families}. Consequently every geometric automorphism of $X_f$ induces an affine map $x\mapsto Ax+B$, and there are $u\in\Fp^\times$ and $h\in k[x]$ with
\begin{equation}\label{eq:poly-lift}
 f(Ax+B)-u f(x)=h(x)^p-h(x).
\end{equation}
If $h$ is nonconstant, then $\deg h\le d/p$. Hence, for every exponent $e>d/p$ with $p\nmid e$, the coefficient of $x^e$ on the right side of \eqref{eq:poly-lift} is zero.
\end{lemma}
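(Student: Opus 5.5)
We first rule out the three exceptional families \eqref{eq:exceptional-families} using birational invariants. Then we get the affine form of automorphisms from Lemma~\ref{lem:lift} and Lemma~\ref{lem:jumps}. Finally, a degree count bounds $\deg h$.

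\emph{Non-exceptionality.} By Proposition~\ref{prop:formulas}, $X_f$ has $(r,N)=(0,d-1)$ with $N=d-1\ge3$. The pairs $(p-1,2p-2)$ and $(1,2)$ have $r\ge1$, so the ordinary family and the characteristic-three family are excluded by the $p$-rank. By Lemma~\ref{lem:exceptional-invariants}, it remains to treat $y^p-y=x^{-\lambda}$ with $\lambda\mid p+1$, and matching $N$ forces $\lambda=d$.

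Under (i) this is impossible, since $d\nmid p+1$. Under (ii) or (iii), $d\mid p+1$, and the genus is $\frac{p-1}{2}(d-1)\ge2$ because $d\ge4$. So Lemma~\ref{lem:monomial} applies:
\begin{itemize}
\item In case (ii) we have $d<p$. Lemma~\ref{lem:monomial}(a) would force $f-f(0)$ to be a monomial, because the $x^{d-1}$ coefficient vanishes. This contradicts the hypothesis.
\item In case (iii), Lemma~\ref{lem:monomial}(b) would force $f-f(0)\in k\,x^{p+1}+k\,x$, again a contradiction.
\end{itemize}
Note that $d\mid p+1$ with $d\ge4$ and $d\ne p+1$ gives $d\le(p+1)/2<p$. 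So (ii) and (iii) between them cover all cases with $d\mid p+1$, although the lemma only requires one hypothesis to hold.

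\emph{Affine form.} Proposition~\ref{prop:VM} now applies, and Lemma~\ref{lem:lift} gives $\sigma=(M(x),uy+h)$ satisfying \eqref{eq:lift}. There is a unique branch point, at $\infty$. By Lemma~\ref{lem:jumps}, the point map of $M$ fixes $\infty$, so $M$ is affine: $M(x)=Ax+B$. Then the left side of \eqref{eq:lift} is a polynomial.

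Next we show $h\in k[x]$. If $h$ had a pole of order $e\ge1$ at a finite point, then $h^p-h$ would have a pole of order $pe$ there. This is impossible because the left side is a polynomial. Hence $h$ is a polynomial, and \eqref{eq:poly-lift} holds.

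\emph{Degree bound.} The left side of \eqref{eq:poly-lift} has degree at most $d$. If $\deg h=e\ge1$, then $h^p-h$ has degree exactly $pe$, with leading coefficient the $p$-th power of that of $h$. So $pe\le d$, that is, $e\le d/p$.

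Now write $h^p-h=h(x)^p-h(x)$. The term $h(x)^p$ contains only monomials $x^{jp}$. The term $-h(x)$ contains monomials only of degree at most $d/p$. So for an exponent $e>d/p$ with $p\nmid e$, the coefficient of $x^e$ on the right side is zero. If $h$ is constant, this is trivial.

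The main obstacle is the exclusion step, and that is fully handled by Lemma~\ref{lem:monomial}. The remainder is routine bookkeeping.
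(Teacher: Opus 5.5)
Your proposal is correct and follows essentially the same route as the paper. You exclude the ordinary and characteristic-three families by the $p$-rank, get $\lambda=d$ by comparing genera, and then rule out the monomial family directly under (i) or via Lemma~\ref{lem:monomial}(a),(b) under (ii),(iii). The rest also matches: Valentini--Madan, the fixed branch point at $\infty$ to get an affine $M$, polarity of $h$, and the count $p\deg h\le d$ (your remark that (ii) and (iii) together cover every $d\mid p+1$ is harmless but not needed).
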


\begin{proof}
The first and third families have positive $p$-rank, while $\gamma(X_f)=0$. Suppose $X_f$ were birational to $y^p-y=x^{-\lambda}$ with $\lambda\mid p+1$. Since $g(X_f)=\frac{p-1}2(d-1)\ge3$, Lemma~\ref{lem:monomial} gives $\lambda=d$, so $d\mid p+1$ and (i) fails. If (ii) holds, Lemma~\ref{lem:monomial}(a) shows that $f-f(0)$ is a monomial, a contradiction; if (iii) holds, Lemma~\ref{lem:monomial}(b) gives a contradiction. Hence Proposition~\ref{prop:VM} applies. By Lemma~\ref{lem:jumps} the induced point map fixes $\infty$, so $M$ is affine, and Lemma~\ref{lem:lift} gives \eqref{eq:poly-lift} with $h\in k(x)$. Since the left side has no finite poles, neither can $h$ (a pole of $h$ would give a pole of order divisible by $p$ in $h^p-h$); hence $h\in k[x]$. If $h$ is nonconstant, then $p\deg h=\deg(h^p-h)\le d$. The monomials of $h^p-h$ have degree divisible by $p$ or at most $\deg h\le d/p$.
\end{proof}

The next lemma settles all admissible values below the uniform range $N\ge7$.

\begin{lemma}[Small one-pole constructions]\label{lem:small-one-pole}
For every admissible pair with $3\le N\le6$, except $(p,N)=(3,3)$, the following reduced equation $y^p-y=f(x)$ defines a curve over $\Fp$ with full geometric automorphism group $C_p$:
\[
\begin{array}{c|c|c}
N & p & f(x)\\ \hline
3 & p\ge5 & x^4+x^2+x\\
4 & p=3 & x^5+x^4+x^2\\
4 & p\ge7 & x^5+x^3+x^2\\
5 & p=5 & x^6+x^4+x\\
5 & p\ge7 & x^6+x^4+x^3\\
6 & p=3 & x^7+x^4\\
6 & p=5 & x^7+x^6\\
6 & p\ge11 & x^7+x^5+x^4.
\end{array}
\]
\end{lemma}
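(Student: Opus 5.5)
The plan is to treat every row of the table the same way: first certify that Lemma~\ref{lem:one-pole-nonexc} applies, then show that the coefficient constraints it produces force the induced affine map to be the identity. By Lemma~\ref{lem:lift}, an automorphism with $M=1$ lies in $T$, so this gives $\Aut_k(X)=T\cong C_p$.

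\textbf{Step 1: admissibility and non-exceptionality.} In each row, $d=N+1$ is prime to $p$, and no exponent occurring in $f$ is divisible by $p$, so $f$ is reduced. Since $f\in\Fp[x]$, the curve is defined over $\Fp$. One hypothesis of Lemma~\ref{lem:one-pole-nonexc} is then checked row by row.
\begin{itemize}
\item Hypothesis (i) ($d\nmid p+1$) covers $(N,p)=(4,3)$, $(6,3)$ and $(6,5)$, since $5\nmid4$, $7\nmid4$ and $7\nmid6$.
\item Hypothesis (ii) covers the generic rows, where $d<p$, the $x^{d-1}$ coefficient is $0$, and $f-f(0)$ is not a monomial. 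These are $N=3$ with $p\ge5$, $N=4$ with $p\ge7$, $N=5$ with $p\ge7$, and $N=6$ with $p\ge11$. Note that $p=7$ is excluded for $N=6$ because $p\mid7$.
\item Hypothesis (iii) covers $(N,p)=(5,5)$: here $d=p+1$, and $x^6+x^4+x\notin k\,x^6+k\,x$ because of the $x^4$ term.
\end{itemize}
Hence every automorphism is $x\mapsto Ax+B$ with $f(Ax+B)-uf(x)=h^p-h$, and the coefficient of $x^e$ on the left vanishes for every $e>d/p$ with $p\nmid e$.

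\textbf{Step 2: the generic rows with $d<p$.} Here $d/p<1$, so every coefficient of degree $e\ge1$ vanishes, and $f(Ax+B)-uf(x)$ is constant.
\begin{itemize}
\item The $x^d$ coefficient gives $A^d=u$.
\item The $x^{d-1}$ coefficient gives $dA^{d-1}B=0$, so $B=0$ because $p\nmid d$.
\item For each remaining monomial $x^e$ of $f$ we then get $A^e=u=A^d$. In all four generic rows the exponents are $d,d-2,d-3$, so $A^2=A^3=1$, hence $A=1$ and $u=1$.
\end{itemize}

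\textbf{Step 3: the rows with $d>p$ (the main obstacle).} These are $p=3$ with $d=5,7$, and $p=5$ with $d=6,7$. Now $h$ may have degree $1$, so only the exponents $e>d/p$ with $p\nmid e$ are constrained:
\[
e\in\{2,4,5\}\ (p=3,d=5),\qquad e\in\{4,5,7\}\ (p=3,d=7),\qquad e\in\{2,3,4,6\}\ (p=5,d=6),\qquad e\in\{2,3,4,6,7\}\ (p=5,d=7).
\]
I would expand $f(Ax+B)$ binomially in characteristic $p$ and proceed as follows.
\begin{itemize}
\item The top coefficient gives $A^d=u\in\{\pm1,\dots\}\subset\Fp^\times$.
\item The $x^{d-1}$ coefficient expresses $B$ as a multiple of $A^{-1}$ times a difference like $A^{d-1}-u\cdot(\text{coeff})$. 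It either forces $B=0$ or pins $B$ down explicitly. For example, for $x^7+x^6$ with $p=5$ it reads $7A^6B+A^6-u=0$.
\item Substituting this value of $B$ into the next one or two constrained coefficients should yield a contradiction unless $A=1$ and $B=0$.
\end{itemize}
I expect these four finite eliminations to be the only delicate part. They are small polynomial systems in $A$ and $B$ over $k$ with $u\in\Fp^\times$, and I would carry them out explicitly, checking that each one forces $(A,B)=(1,0)$. Once $M=1$ is established in every row, Lemma~\ref{lem:lift} places the automorphism in $T$, which completes the proof.
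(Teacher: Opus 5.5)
\textbf{Gap in Step 3, in the two rows with $p\mid d-1$.} Steps 1 and 2 are correct and match the paper. The failure occurs in the rows $(p,N)=(3,6)$ with $f=x^7+x^4$, and $(p,N)=(5,5)$ with $f=x^6+x^4+x$.

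There your opening move is not available. The $x^{d-1}$-coefficient is not one of your constrained exponents; your own lists $\{4,5,7\}$ and $\{2,3,4,6\}$ omit it. On the right-hand side it equals the $p$-th power of the leading coefficient of $h$, so it does not determine $B$.

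More seriously, the vanishing conditions you restrict yourself to are not sufficient in these rows:
\begin{itemize}
\item For $f=x^7+x^4$ over $\mathbb F_3$ and $(A,B,u)=(-1,-1,-1)$ one gets $f(Ax+B)-uf(x)=-x^6-x^3$. Its coefficients at $x^4,x^5,x^7$ all vanish.
\item For $f=x^6+x^4+x$ over $\mathbb F_5$ and $(A,B,u)=(-1,0,1)$ one gets $-2x$. Its coefficients at $x^2,x^3,x^4,x^6$ all vanish.
\end{itemize}
Neither polynomial has the form $h^p-h$, but no elimination based only on your constraints can detect this. So Step 3 as planned cannot force $(A,B)=(1,0)$. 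Note also that for $(p,d)=(3,7)$ the bound is $\deg h\le 2$, not $1$, and the quadratic term of $h$ is exactly what matters there.

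\textbf{The missing idea.} You need the full shape of $h^p-h$, which ties the coefficient of $x^{pj}$ to that of $x^j$.
\begin{itemize}
\item For $(3,6)$, write $h=c_2x^2+c_1x+\delta$. By Lucas, $\binom72\equiv\binom42\equiv0\pmod 3$, so the left side has no $x^2$-term while the right side has $-c_2x^2$; hence $c_2=0$. The $x^6$-coefficient $A^6B=c_2^3$ then gives $B=0$. Finally $A^7=u=A^4$ gives $A^3=1$, so $A=1$.
\item For $(5,5)$, write $h=cx+\delta$. Degrees $6$ and $4$ give $A^2=1$, and degree $3$ gives $4A^3B=0$, so $B=0$. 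Degree $5$ then gives $c^5=0$. The $x$-coefficient gives $A=u=A^6$, so $A^5=1$ and hence $A=1$.
\end{itemize}
Equivalently, you can Artin--Schreier reduce $f(Ax+B)-uf(x)$ and require the reduction to be constant, by Lemma~\ref{lem:reduced-unique}. In the other two rows with $d>p$, namely $(3,4)$ and $(5,6)$, your plan does go through using only the constrained coefficients.
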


\begin{proof}
The table lists every admissible pair: for $3\le N\le6$ the condition $p\nmid N+1$ excludes exactly $(p,N)\in\{(5,4),(3,5),(7,6)\}$, and $(3,3)$ is excluded by Proposition~\ref{prop:zero-obstructions}. In every row $d=N+1$, all exponents are prime to $p$, and there is a unique branch point at $\infty$.

\emph{Non-exceptionality.} For the rows $(p,N)=(3,4),(3,6),(5,6)$ one has $d\nmid p+1$, so Lemma~\ref{lem:one-pole-nonexc}(i) applies. In the rows with $p\ge5$, $p\ge7$ or $p\ge11$, either $d\nmid p+1$, or $d\mid p+1$ with $d<p+1$; in the latter case $d<p$, the coefficient of $x^{d-1}$ vanishes and $f-f(0)$ is not a monomial, so Lemma~\ref{lem:one-pole-nonexc}(ii) applies. For $(p,N)=(5,5)$ one has $d=6=p+1$ and $f-f(0)=x^6+x^4+x$ has an $x^4$-term, so Lemma~\ref{lem:one-pole-nonexc}(iii) applies. Hence in every row \eqref{eq:poly-lift} holds for every geometric automorphism.

\emph{Rigidity.} For $N=3$ and $p\ge5$, take $f=x^4+x^2+x$. Here $4<p$, so $h$ is constant. The coefficients of $x^4$ and $x^3$ in \eqref{eq:poly-lift} give $u=A^4$ and $4A^3B=0$, so $B=0$; the coefficients of $x^2$ and $x$ then give
\[
 A^2=u,\qquad A=u.
\]
Thus $A^2=A^4=A$, and $A=1$.

For $(p,N)=(3,4)$, take $f=x^5+x^4+x^2$. Now $h=cx+\delta$. Modulo $3$ the coefficients of $x^5,x^4,x^2$ give
\[
 u=A^5,
 \qquad
 A^4(2B+1)=u,
 \qquad
 A^2(B^3+1)=u.
\]
Thus $A=1+2B$ and $A^3=B^3+1$. Cubing the first equality in characteristic $3$ gives $A^3=1+2B^3$, so $B=0$ and $A=1$.

For $N=4$ and $p\ge7$, take $f=x^5+x^3+x^2$. Since $5<p$, $h$ is constant. The coefficients of $x^5,x^4,x^3,x^2$ give
\[
 u=A^5,\qquad 5A^4B=0,\qquad A^3=u,\qquad A^2=u,
\]
the last two after using $B=0$. Hence $A^2=A^3=A^5$, and $A=1$.

For $(p,N)=(5,5)$, take $f=x^6+x^4+x$ and write $h=cx+\delta$. The $x^6$- and $x^4$-coefficients give
\[
 u=A^6,\qquad A^4=u,
\]
so $A^2=1$. Since $\binom63\equiv0\pmod5$, the coefficient of $x^3$ is $4A^3B$, hence $B=0$. The coefficient of $x^5$ is then $0=c^5$, so $c=0$; comparison at $x$ gives $A=u=A^6$, hence $A^5=1$. Together with $A^2=1$ this yields $A=1$.

For $N=5$ and $p\ge7$, take $f=x^6+x^4+x^3$. Since $6<p$, $h$ is constant. The coefficients of $x^6,x^5,x^4,x^3$ give
\[
 u=A^6,\qquad 6A^5B=0,\qquad A^4=u,\qquad A^3=u,
\]
so $B=0$ and $A=1$.

For $(p,N)=(3,6)$, take $f=x^7+x^4$ and write
\[
 h=c_2x^2+c_1x+\delta.
\]
By Lucas' theorem $\binom72\equiv\binom42\equiv0\pmod3$, so the coefficient of $x^2$ on the left side of \eqref{eq:poly-lift} is zero, while on the right side it is $-c_2$; thus $c_2=0$. The coefficient of $x^6$ is $A^6B=c_2^3=0$, hence $B=0$. Finally $u=A^7$ from degree $7$, while degree $4$ gives $A^4=u$; therefore $A^3=1$, and in characteristic $3$ this forces $A=1$.

For $(p,N)=(5,6)$, take $f=x^7+x^6$ and write $h=cx+\delta$. Since $\binom72\equiv1$ and $\binom62\equiv0\pmod5$, the coefficient of $x^2$ on the left side is $A^2B^5$, whereas $h^5-h$ has no $x^2$-term. Hence $B=0$. The degree-$7$ and degree-$6$ coefficients then give $u=A^7=A^6$, so $A=1$.

Finally, for $N=6$ and $p\ge11$, take $f=x^7+x^5+x^4$. Since $7<p$, $h$ is constant. The coefficients of $x^7,x^6,x^5,x^4$ give
\[
 u=A^7,\qquad 7A^6B=0,\qquad A^5=u,\qquad A^4=u,
\]
so again $B=0$ and $A=1$.

Thus in every row the induced affine transformation is the identity, and Lemma~\ref{lem:lift} shows that the full geometric automorphism group is the deck group $C_p$.
\end{proof}

We now treat the uniform large range.

\begin{proposition}[Large one-pole construction]\label{prop:one-pole-large}
Let $p$ be an odd prime and let $N\ge7$ satisfy $p\nmid N+1$. Then there exists an Artin--Schreier curve $X/\Fp$ such that
\[
 g(X)=\frac{p-1}{2}N,\qquad \gamma(X)=0,
\qquad \Aut_{\Fpb}(X)\cong C_p.
\]
\end{proposition}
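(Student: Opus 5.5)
We construct an explicit reduced polynomial $f\in\Fp[x]$ of degree $d=N+1\ge8$, with $p\nmid d$, and show that every geometric automorphism of $X_f:y^p-y=f(x)$ induces the identity on the $x$-line. Then Lemma~\ref{lem:lift} gives $\Aut_{\Fpb}(X_f)=T\cong C_p$, and Proposition~\ref{prop:formulas} gives the genus and the $p$-rank.

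\emph{The model.} The natural choice is a sparse polynomial
\[
 f(x)=x^d+x^{e_1}+x^{e_2},
\]
with $d-1>e_1>e_2$ and all exponents prime to $p$. The gaps should be chosen so that:
\begin{itemize}
\item the coefficient of $x^{d-1}$ vanishes and $f-f(0)$ is not a monomial;
\item the exponents satisfy $\gcd(d-e_1,\,e_1-e_2)=1$, or a similar coprimality condition that forces $A=1$.
\end{itemize}
For $p\ge5$ one can take $e_1=d-2$ and $e_2=d-3$, after adjusting whenever $p$ divides $d-2$ or $d-3$. For $p=3$ one cannot rely on $d-1$ being a permitted exponent, so the exponents should be chosen among the residues prime to $3$ and the case split by $d\bmod 3$.

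\emph{Non-exceptionality.} We apply Lemma~\ref{lem:one-pole-nonexc}. If $d\nmid p+1$, case (i) applies. Otherwise $d\mid p+1$, and either $d<p$, where case (ii) applies because $f-f(0)$ is not a monomial and has no $x^{d-1}$ term, or $d=p+1$, where case (iii) applies because of the middle term $x^{e_1}$. This gives \eqref{eq:poly-lift} with $\deg h\le d/p$ when $h$ is nonconstant.

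\emph{Rigidity.} We compare coefficients in \eqref{eq:poly-lift} from the top down. The top degree gives $u=A^d$. The $x^{d-1}$ coefficient is $dA^{d-1}B$, and it must vanish on the right side whenever $d-1>d/p$ and $p\nmid d-1$. Since $d\ge8$ we have $d-1>d/p$, so this holds except when $p\mid d-1$. When $p\nmid d-1$, we get $dA^{d-1}B=0$, hence $B=0$. With $B=0$ the left side is simply $(A^d-u)x^d+(A^{e_1}-u)x^{e_1}+\cdots$. Provided $e_2>d/p$, all of these coefficients must vanish, so $A^d=A^{e_1}=A^{e_2}$, and coprimality of the gaps forces $A=1$.

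\emph{Main obstacle.} The hard part is the case $p\mid d-1$, together with small $p$ (especially $p=3$), where $h$ may be nonconstant and binomial coefficients vanish modulo $p$. When $p\mid d-1$ we will instead read off $B$ from the $x^{d-2}$ coefficient, which is $\binom d2A^{d-2}B^2$ plus any middle-term contribution. In that case we place $e_1$ lower, so that the $x^{d-2}$ coefficient is a pure nonzero multiple of $B^2$, using $p\nmid\binom d2$ when $p\nmid d(d-1)/2$. Alternatively we choose a degree $j$, prime to $p$ and greater than $d/p$, in which Lucas' theorem makes $\binom dj B^{d-j}$ the only contribution, as was done for the rows $(3,6)$ and $(5,6)$ of Lemma~\ref{lem:small-one-pole}. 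The plan is to split the proof by $d\bmod p$ and handle $p=3$ and $p=5$ separately. In each case we place $e_1,e_2$ just above $d/p$ and prime to $p$, so that $h^p-h$ cannot interfere and $\gcd(d-e_1,e_1-e_2)=1$ still holds.
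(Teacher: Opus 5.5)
\textbf{There is a genuine gap in the case $d\equiv1\pmod p$, which you correctly flag as the main obstacle but do not resolve.} Your treatment of $p\nmid d-1$ is correct and is essentially the paper's Case~1; the paper uses consecutive gaps $s<t$ in $\{2,\dots,5\}$, so that $A^s=A^t=1$. Your non-exceptionality step is also fine.

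\emph{Your first fallback cannot work.} Since $p$ is odd, $p\mid d-1$ implies $p\mid d(d-1)/2=\binom d2$. So the $x^{d-2}$-coefficient of $(Ax+B)^d$ vanishes identically in exactly the case where you want to use it.

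\emph{Your Lucas alternative fails in general.} It reads $B$ off the expansion of the leading term only. Write $d=1+pm$; then $(Ax+B)^d=(Ax+B)(A^px^p+B^p)^m$. Its only monomials of exponent prime to $p$ are $\binom mi A^{pi+1}B^{p(m-i)}x^{pi+1}$. For $d=p+1$ (in range for every $p\ge7$), or more generally $d=p^k+1$, there is no $i<m$ with $pi+1>d/p$ and $\binom mi\not\equiv0\pmod p$. For example, with $p=3$ and $d=10$ one gets $(Ax+B)^{10}=(Ax+B)(A^9x^9+B^9)$, whose only prime-to-$3$ exponent below $10$ is $1<d/p$. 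So the leading term gives no information on $B$ in the range where $h^p-h$ cannot interfere. Placing $e_1,e_2$ ``just above $d/p$'' makes this worse: the shifts of the middle terms then only land in degrees where $h$ may interfere.

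\emph{The missing idea is to detect $B$ through the Taylor shift of a high middle term.} The paper takes $f=x^d+x^{d-2}+x^{d-5}$.
\begin{itemize}
\item Because $\binom d2\equiv0$, the $x^{d-2}$-coefficient gives $A^{d-2}=u=A^d$, so $A^2=1$.
\item For $p\ge5$ one also has $\binom d3\equiv0$. The $x^{d-3}$-coefficient is then $\binom d3A^{d-3}B^3+(d-2)A^{d-3}B=-A^{d-3}B$, which forces $B=0$.
\item The $x^{d-5}$-coefficient then gives $A^5=1$, hence $A=1$.
\end{itemize}
For $p=3$, $\binom d3$ need not vanish. One must split further by $d\bmod 9$, using degree $d-3$, degree $d-6$, or degrees $d-3$ and $d-5$ jointly. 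Your plan does not address this at all.
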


\begin{proof}
Set $d=N+1\ge8$. In all cases below the polynomial $f$ has the form $x^d+x^{d-s}+x^{d-t}$ with $2\le s<t\le5$ and all exponents prime to $p$. Since $d\ge8$ and $p\ge3$, we have $d-5>d/p$, so the last assertion of Lemma~\ref{lem:one-pole-nonexc} applies to every exponent $e\ge d-5$ prime to $p$.

\smallskip
\noindent\emph{Case 1: $d\not\equiv1\pmod p$.}
Choose consecutive integers $s<t$ with $2\le s<t\le5$ such that
\[
 s\not\equiv d\pmod p,\qquad t\not\equiv d\pmod p.
\]
For $p=3$ one has $d\equiv2\pmod3$ and we take $(s,t)=(3,4)$. For $p\ge5$, take $(s,t)=(3,4)$ if $d\equiv2\pmod p$, $(s,t)=(4,5)$ if $d\equiv3\pmod p$, and $(s,t)=(2,3)$ otherwise. Define
\begin{equation}\label{eq:oneA-new}
 X:\qquad y^p-y=x^d+x^{d-s}+x^{d-t}.
\end{equation}
All exponents are positive and prime to $p$.

\emph{Non-exceptionality.} If $d\mid p+1$, then $d\ne p+1$ because $d\not\equiv1\pmod p$, so $d<p$. The coefficient of $x^{d-1}$ vanishes because $s\ge2$, and $f-f(0)$ is not a monomial. Hence Lemma~\ref{lem:one-pole-nonexc} applies in all cases, through (i) or (ii).

\emph{Rigidity.} The coefficient of $x^d$ in \eqref{eq:poly-lift} gives $u=A^d$. Since $d-1>d/p$ and $p\nmid d-1$, comparison at degree $d-1$ gives
\[
 dA^{d-1}B=0,
\]
so $B=0$. Comparing at degrees $d-s$ and $d-t$ gives
\[
 A^{d-s}=u=A^d,\qquad A^{d-t}=u=A^d.
\]
Thus $A^s=A^t=1$, and because $t=s+1$ we obtain $A=1$.

\smallskip
\noindent\emph{Case 2: $d\equiv1\pmod p$.}
Take
\begin{equation}\label{eq:oneB-new}
 X:\qquad y^p-y=x^d+x^{d-2}+x^{d-5}.
\end{equation}
The exponents are positive and prime to $p$, since $d-2\equiv-1$ and $d-5\equiv-4\pmod p$.

\emph{Non-exceptionality.} If $d\mid p+1$, then $d\equiv1\pmod p$ forces $d=p+1$, and $f-f(0)$ contains the term $x^{p-1}$ with $p-1\ge2$. Hence Lemma~\ref{lem:one-pole-nonexc} applies, through (i) or (iii).

\emph{Rigidity.} The coefficient of $x^d$ gives $u=A^d$. Since
\[
 \binom d2\equiv0\pmod p,
\]
comparison at degree $d-2$ gives $A^{d-2}=A^d$, hence
\begin{equation}\label{eq:A2}
 A^2=1.
\end{equation}
If $p\ge5$, then $\binom d3\equiv0\pmod p$, and comparison at degree $d-3$ gives
\[
 (d-2)A^{d-3}B=0,
\]
so $B=0$. Comparison at degree $d-5$ gives $A^5=1$, and together with \eqref{eq:A2} this yields $A=1$.

It remains to consider $p=3$. We distinguish the three residue classes of $d$ modulo $9$; recall that $d\equiv1\pmod3$, and all exponents used below are prime to $3$ and larger than $d/3$.

If $d\equiv1\pmod9$, Lucas' theorem gives $\binom d3\equiv0\pmod3$. The coefficient of $x^{d-3}$ in
$f(Ax+B)-u f(x)$ is therefore
\[
 \binom d3 A^{d-3}B^3+(d-2)A^{d-3}B
   =-A^{d-3}B,
\]
so $B=0$.

If $d\equiv4\pmod9$ (so $d\ge13$), Lucas' theorem gives
\[
 \binom d6\equiv0,
 \qquad
 \binom{d-2}{4}\equiv0
 \pmod3.
\]
Hence the coefficient of $x^{d-6}$ is
\[
 \binom d6 A^{d-6}B^6
 +\binom{d-2}{4}A^{d-6}B^4
 +(d-5)A^{d-6}B
 =-A^{d-6}B,
\]
and again $B=0$.

Finally, suppose $d\equiv7\pmod9$. Lucas' theorem gives
\[
 \binom d3\equiv2,
 \qquad
 d-2\equiv2,
 \qquad
 \binom d5\equiv0,
 \qquad
 \binom{d-2}{3}\equiv1
 \pmod3.
\]
Thus the coefficients of $x^{d-3}$ and $x^{d-5}$ are, respectively,
\[
 2A^{d-3}(B^3+B)
 \quad\text{and}\quad
 A^{d-5}(B^3+1-A^5).
\]
They must vanish, and hence
\[
 B(B^2+1)=0,
 \qquad
 B^3+1-A^5=0.
\]
If $B\ne0$, then $B^2=2$. If $A=1$, the second equation gives $B=0$, a contradiction. If $A=-1$, it gives $B^3=1$, hence $B=1$, again contradicting $B^2=2$. Thus $B=0$ in every case. The coefficient of $x^{d-5}$ then gives $A^5=1$, and \eqref{eq:A2} yields $A=1$.

In both cases every geometric automorphism induces the identity on the $x$-line, and hence belongs to the deck group. Proposition \ref{prop:formulas} gives
\[
 g(X)=\frac{p-1}{2}(d-1)=\frac{p-1}{2}N,
 \qquad \gamma(X)=0.
\]
\end{proof}

\begin{theorem}[Sharp $p$-rank-zero range]\label{thm:zero-rank}
Let $p$ be an odd prime and let $N\ge1$. There exists an Artin--Schreier curve $X/\Fp$ such that
\[
 g(X)=\frac{p-1}{2}N,\qquad
 \gamma(X)=0,\qquad
 \Aut_{\Fpb}(X)\cong C_p
\]
if and only if
\[
 p\nmid N+1,\qquad N\ge3,\qquad (p,N)\ne(3,3).
\]
\end{theorem}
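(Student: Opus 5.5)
The plan is to assemble Theorem~\ref{thm:zero-rank} from the results already proved in this section, treating the two directions separately.

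\emph{Necessity.} Suppose $X/\Fp$ is an Artin--Schreier curve with $\gamma(X)=0$, $g(X)=\frac{p-1}{2}N$ and $\Aut_{\Fpb}(X)\cong C_p$.
\begin{enumerate}[label=\textup{(\alph*)}]
\item By Proposition~\ref{prop:formulas}, $\gamma(X)=0$ forces exactly one pole $(s=1)$. After moving this pole to $\infty$ and reducing, the equation becomes $y^p-y=f(x)$ with $f\in k[x]$ of degree $d$ and $p\nmid d$.
\item The genus formula then gives $N=d-1$, so $p\nmid N+1$.
\item The cases $N=1$, $N=2$ and $(p,N)=(3,3)$ are ruled out by Proposition~\ref{prop:zero-obstructions}. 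In each of them the full geometric automorphism group strictly contains $C_p$, which contradicts $\Aut_{\Fpb}(X)\cong C_p$.
\end{enumerate}
Hence $N\ge3$, $p\nmid N+1$ and $(p,N)\ne(3,3)$. Reducedness uses only translations $y\mapsto y+h$ over $k$, and it suffices to do this geometrically, because genus, $p$-rank and $\Aut_{\Fpb}$ are all geometric invariants.

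\emph{Sufficiency.} Assume $p\nmid N+1$, $N\ge3$ and $(p,N)\ne(3,3)$, and split on the size of $N$.
\begin{enumerate}[label=\textup{(\alph*)}]
\item If $3\le N\le6$, Lemma~\ref{lem:small-one-pole} gives an explicit curve over $\Fp$ with $\Aut_{\Fpb}(X)\cong C_p$. That proof already checks that its table covers exactly the admissible pairs in this range, namely all pairs except $(5,4)$, $(3,5)$, $(7,6)$ and $(3,3)$. In each row $f\in\Fp[x]$ has degree $d=N+1$ and all its exponents are prime to $p$. So Proposition~\ref{prop:formulas} gives $g=\frac{p-1}{2}N$ and $\gamma=0$.
\item If $N\ge7$, Proposition~\ref{prop:one-pole-large} supplies the required curve directly.
\end{enumerate}

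There is no real obstacle, since the substantive work lies in the cited results. The only step needing care is the bookkeeping in part (a) of sufficiency: one must confirm that the admissibility condition $p\nmid N+1$ together with the exclusion of $(3,3)$ matches exactly the rows listed in Lemma~\ref{lem:small-one-pole} for $3\le N\le 6$. For instance, $N=4$ needs $p\ne5$, $N=5$ needs $p\ne3$, and $N=6$ needs $p\ne7$. Each remaining prime is covered by one of the listed rows, e.g.\ $p=3$, $p=5$ and $p\ge11$ for $N=6$.
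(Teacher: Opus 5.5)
Your proposal is correct and follows essentially the same route as the paper. Necessity comes from Proposition~\ref{prop:formulas} (one pole, lower jump $d=N+1$ prime to $p$) together with Proposition~\ref{prop:zero-obstructions}. Sufficiency splits into Lemma~\ref{lem:small-one-pole} for $3\le N\le 6$ and Proposition~\ref{prop:one-pole-large} for $N\ge 7$.
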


\begin{proof}
For necessity, let $d$ be the lower jump at the unique branch point. Proposition~\ref{prop:formulas} gives $d=N+1$, and reduced Artin--Schreier form gives $p\nmid d$. Proposition~\ref{prop:zero-obstructions} excludes $N=1$, the admissible case $N=2$, and $(p,N)=(3,3)$.

Conversely, suppose the displayed conditions hold. If $3\le N\le6$, Lemma~\ref{lem:small-one-pole} gives an explicit curve over $\Fp$. If $N\ge7$, Proposition~\ref{prop:one-pole-large} applies.
\end{proof}

\section{Two branch points}\label{sec:two-poles}

We begin with a simple arithmetic lemma.

\begin{lemma}\label{lem:two-arithmetic}
Let $p$ be an odd prime and $N\ge3$. Except for $(p,N)=(3,4)$, there exist positive integers $a,b,t$ such that
\[
 a+b=N,
 \qquad p\nmid abt,
 \qquad a\ne b,
 \qquad 1\le t<b,
 \qquad \gcd(a+t,b-t)=1.
\]
\end{lemma}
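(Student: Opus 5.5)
The plan is to exhibit explicit triples $(a,b,t)$ in a small number of cases, distinguished by whether $p$ divides $N-2$ and by the parity of $N$. The default choice is
\[
 (a,b,t)=(N-2,\,2,\,1).
\]
Then $a+b=N$ and $1\le t<b$ hold. Also $\gcd(a+t,b-t)=\gcd(N-1,1)=1$. The product is $abt=2(N-2)$, which is prime to $p$ as soon as $p\nmid N-2$, since $p$ is odd. Finally $a\ne b$ holds as soon as $N\ne4$, and $a\ge1$ since $N\ge3$. So this choice settles every case with $p\nmid N-2$ and $N\ne4$.

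The case $N=4$ is handled directly. Here $a\ne b$ forces $\{a,b\}=\{1,3\}$, and $t<b$ forces $b=3$, so $a=1$. The choice $t=1$ gives $\gcd(2,2)=2$, so we must take $t=2$. This gives $\gcd(3,1)=1$ and $abt=6$, which works exactly when $p\ne3$. Thus the exceptional pair $(3,4)$ appears naturally. One also sees that it is a genuine exception: when $p=3$ we have $3\mid abt$ for the only candidate.

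Now suppose $p\mid N-2$; then $N\ne4,6$ automatically.
\begin{itemize}
\item If $N$ is odd, take $(a,b,t)=(1,N-1,1)$. Then $\gcd(2,N-2)=1$ because $N$ is odd. Moreover $N-1\equiv1\pmod p$, so $p\nmid abt$, and $a\ne b$ since $N\ge3$.
\item If $N$ is even and $p\ge5$, take $(a,b,t)=(N-3,3,2)$. Then $\gcd(N-1,1)=1$. We have $N-3\equiv-1\pmod p$ and $p\nmid 6$, so $p\nmid abt$, and $a\ne b$ since $N\ne6$.
\item If $N$ is even and $p=3$, then $N\equiv2\pmod6$. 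Take $(a,b,t)=(N-4,4,1)$. Here $a\equiv1\pmod3$, and $\gcd(N-3,3)=1$ because $N-3\equiv2\pmod3$. This requires $a\ne b$, i.e.\ $N\ne8$.
\item The leftover case $(p,N)=(3,8)$ is settled by $(a,b,t)=(1,7,2)$, which gives $\gcd(3,5)=1$ and $abt=14$.
\end{itemize}

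Each verification is a one-line check of the congruences and gcds above. The only real work is making sure the case split is exhaustive. The main obstacle is the even-$N$, $p=3$ branch, where the default moduli collide with $3$. I would double-check that branch, and the small values $N=3,4,5,6,8$, by hand against all four conditions.
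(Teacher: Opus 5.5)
Your proof is correct: every triple satisfies all five conditions, including the two you leave implicit ($a\ge1$ and $t<b$), which hold because $p\mid N-2$ forces $N\ge p+2$. The overall method is the paper's. Both proofs choose explicit triples case by case, with the same default $(N-2,2,1)$ and the same special triples $(1,3,2)$ for $N=4$ and $(1,7,2)$ for $(p,N)=(3,8)$.

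The difference is in the branch $p\mid N-2$. The paper handles it with the single family $(N-p-1,\,p+1,\,1)$. This works because $a\equiv1\pmod p$ and $\gcd(N-p,p)=1$, and it fails only at $N=2p+2$, where $a=b$. That one value is then patched with $(2p-1,3,2)$ for $p\ge5$ and with $(1,7,2)$ for $p=3$.

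You instead split by the parity of $N$:
\begin{itemize}
\item $(1,N-1,1)$ for odd $N$;
\item $(N-3,3,2)$ for even $N$ with $p\ge5$, which at $N=2p+2$ is exactly the paper's patch;
\item $(N-4,4,1)$ for even $N$ with $p=3$, leaving the same sporadic value $(3,8)$.
\end{itemize}

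The paper's route is slightly more economical, since one formula covers the whole residue class. Yours uses small, $p$-independent values of $b$.

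Your side remark on $N=4$ is a genuine bonus that the paper does not record. You note that $(1,3,2)$ is the only candidate there, and that $3\mid b$ when $p=3$. So $(3,4)$ is a true exception, not an artifact of the chosen triples. This explains why the proof of Proposition~\ref{prop:two-poles} has to treat $(p,N)=(3,4)$ with a separate equal-jump model.
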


\begin{proof}
If $N\not\equiv2\pmod p$ and $N\ne4$, take
\[
 (a,b,t)=(N-2,2,1).
\]
If $N\equiv2\pmod p$ and $N\ne2p+2$, take
\[
 (a,b,t)=(N-p-1,p+1,1).
\]
Here $N>2$ implies $N\ge p+2$, and
\[
 \gcd(a+t,b-t)=\gcd(N-p,p)=1.
\]
If $N=2p+2$ and $p\ge5$, take
\[
 (a,b,t)=(2p-1,3,2).
\]
If $(p,N)=(3,8)$, take $(a,b,t)=(1,7,2)$. Finally, if $N=4$ and $p\ge5$, take $(a,b,t)=(1,3,2)$. Since $N=4\not\equiv2\pmod p$ for odd $p$, these cases exhaust all possibilities.
\end{proof}

\begin{proposition}\label{prop:two-poles}
Let $p$ be an odd prime and let $N\ge3$. Then there exists an Artin--Schreier curve $X/\Fp$ with
\[
 g(X)=\frac{p-1}{2}N,
 \qquad
 \gamma(X)=p-1,
 \qquad
 \Aut_{\Fpb}(X)\cong C_p.
\]
\end{proposition}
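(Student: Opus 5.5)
We build the curve from Lemma~\ref{lem:two-arithmetic}, and treat the excluded pair $(p,N)=(3,4)$ separately. Two poles give $\gamma=p-1$. Put them at $\infty$ and $0$ with orders $d_\infty=a+1$ and $d_0=b+1$, so that $N=(a+1)+(b+1)-2=a+b$. This choice does not fit the lemma well, since the lemma asks for $p\nmid ab$ and not $p\nmid(a+1)(b+1)$. So I would instead read the lemma's data as pole orders $a,b$ together with an extra small parameter, and add a constant to reach the right genus. Concretely, I propose
\[
 X:\quad y^p-y=x^{a}+x^{t}+\frac{1}{x^{b}}+\frac{1}{x^{b-t}}\ \text{-type expression},
\]
where the genus is adjusted by taking $N=a+b$ with $\sum(d_i+1)-2=a+b$. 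That works exactly when the two pole orders are $d_\infty=a$ and $d_0=b$ and we use $N=a+b$. The orders $a$ and $b$ are prime to $p$, which makes $f$ reduced.

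The precise model I would try is
\[
 f(x)=x^{a}+x^{t}+x^{-b}+x^{-(b-t)}\quad\text{or}\quad f(x)=x^{a}+x^{-b}+x^{-(b-t)}.
\]
I would choose whichever is compatible with $p\nmid t$ and with $1\le b-t$. The gcd condition suggests the second form, with the auxiliary term placed so that the surviving exponents are $a+t$ and $b-t$ after a shift.

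Non-exceptionality comes first. Here $r=1$, so Lemma~\ref{lem:exceptional-invariants} leaves only $(r,N)=(1,2)$ and, for $p=3$, $(2,4)$ from the ordinary family. Since $r=1$ and $N\ge3$, Proposition~\ref{prop:VM} applies. By Lemma~\ref{lem:jumps}, $M$ preserves $\{0,\infty\}$ together with the jumps $a\ne b$, so it fixes both points and $M(x)=Ax$. Lemma~\ref{lem:lift} then forces $h$ to have poles only at $0$ and $\infty$. The terms of $f$ have orders below $p$ times anything relevant, so a Laurent-monomial comparison shows that $h^p-h$ can cancel only monomials with exponent divisible by $p$. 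Consequently \eqref{eq:lift} reduces to $A^{e}=u$ for every exponent $e$ occurring in $f$.

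This gives $A^{a}=A^{-b}=A^{-(b-t)}=u$, hence $A^{a+b}=1$ and $A^{t}=1$. From these we want $A=1$. This is where the condition $\gcd(a+t,b-t)=1$ should enter: the relations $A^{a}=A^{-(b-t)}$ and $A^{-b}=A^{-(b-t)}$ give $A^{a+b-t}=1$ and $A^{t}=1$, and I would arrange the monomials so that the two resulting exponents are exactly $a+t$ and $b-t$. The main obstacle is picking the auxiliary monomial so that its exponent is prime to $p$ and larger than anything $h^p-h$ could affect, while the relations come out as $A^{a+t}=A^{b-t}=1$. After that, $u=A^a=1$ and the automorphism lies in $T$. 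Finally, $(p,N)=(3,4)$ needs a separate explicit model, for example pole orders $(1,3)$ impossible since $3\mid3$, so $(a,b)=(2,2)$ with an auxiliary term breaking the swap $x\mapsto c/x$. I would verify that case by direct coefficient comparison, also checking the possible involution that exchanges $0$ and $\infty$.
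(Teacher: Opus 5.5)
Your proposal has a genuine gap. It never fixes a defining equation, and the model you lean towards does not work with the data of Lemma~\ref{lem:two-arithmetic}.

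In your labeling (pole of order $a$ at $\infty$, of order $b$ at $0$), the equation $y^p-y=x^a+x^{-b}+x^{-(b-t)}$ gives $A^a=A^{-b}=A^{-(b-t)}=u$, hence $A^{a+b}=A^{t}=1$. To conclude you would need $\gcd(N,t)=1$, which the lemma does not supply. The model fails for all of the lemma's choices with $t=2$, where $N$ is even. For instance, $(a,b,t)=(1,3,2)$ with $N=4$ and $p\ge5$ gives $y^p-y=x+x^{-3}+x^{-1}$. Its right side is an odd function, so $(x,y)\mapsto(-x,-y)$ is an extra automorphism.

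There are two further problems with your candidates:
\begin{itemize}
\item The lemma guarantees $p\nmid t$ but not $p\nmid b-t$. For $b=p+1$, $t=1$ your auxiliary term is $x^{-p}$, so your claim that $f$ is reduced is unchecked.
\item The four-term variant can change the jump at $\infty$ when $t>a$, as happens again for $(1,3,2)$.
\end{itemize}
You flag the choice of auxiliary monomial as ``the main obstacle'', but that choice is the whole content of the construction.

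The missing step is to use the exponent $t$ itself, as a lower-order term at the pole of order $b$. In your labeling this is $y^p-y=x^a+x^{-b}+x^{-t}$, which is the paper's model $y^p-y=x^b+x^t+x^{-a}$ after $x\mapsto1/x$. Since $1\le t<b$ and $p\nmid t$, the equation is reduced with distinct jumps $a\ne b$. The relations $\lambda^{b}=\lambda^{t}=\lambda^{-a}=u$ then give $\lambda^{b-t}=\lambda^{a+t}=1$, and $\gcd(a+t,b-t)=1$ forces $\lambda=1$.

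Two smaller points:
\begin{itemize}
\item The constancy of $h$ should be argued via pole orders. A pole of $h$ of order $e$ gives a pole of order $pe$ of $h^p-h$, whereas the left side of \eqref{eq:lift} only has poles of order prime to $p$. Your statement that $h^p-h$ contains only monomials with exponent divisible by $p$ is false, because of the $-h$ term.
\item For $(p,N)=(3,4)$ you correctly see that both jumps must be $2$, but you must exhibit and check a model. For example, take $y^3-y=x^2+x+x^{-2}$. For $x\mapsto\lambda x$ one gets $\lambda^2=\lambda=u$, so $\lambda=1$. For $x\mapsto\lambda/x$ the transformed right side has a simple pole $\lambda x^{-1}$ at $0$, which neither $uf(x)$ nor $h^3-h$ can cancel.
\end{itemize}
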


\begin{proof}
Assume first that $(p,N)\ne(3,4)$, and choose $a,b,t$ as in Lemma \ref{lem:two-arithmetic}. Consider
\begin{equation}\label{eq:two-new}
 X:\qquad y^p-y=x^b+x^t+x^{-a}.
\end{equation}
The poles are $0$ and $\infty$, with distinct lower jumps $a$ and $b$. Proposition \ref{prop:formulas} gives
\[
 g(X)=\frac{p-1}{2}(a+b)=\frac{p-1}{2}N,
 \qquad
 \gamma(X)=p-1.
\]
Here $(r,N)=(1,N)$ with $N\ge3$, so Lemma~\ref{lem:exceptional-invariants} shows that $X$ is not exceptional (note $p-1\ne1$). Thus every automorphism descends to the base.

Because the two lower jumps are distinct, Lemma \ref{lem:jumps} forces the induced base automorphism to fix $0$ and $\infty$ separately. Hence it is $x\mapsto\lambda x$ for some $\lambda\in k^\times$. Equation \eqref{eq:lift} gives
\[
 (\lambda^b-u)x^b+(\lambda^t-u)x^t+(\lambda^{-a}-u)x^{-a}=h^p-h.
\]
A pole of $h^p-h$ has order divisible by $p$, whereas $a,b,t$ are prime to $p$. Hence the pole of the left side at $0$ must vanish, giving $\lambda^{-a}=u$, and at $\infty$ cancellation first at order $b$ and then at order $t$ gives
\[
 \lambda^b=u,\qquad \lambda^t=u.
\]
Therefore
\[
 \lambda^{a+t}=1,
 \qquad
 \lambda^{b-t}=1.
\]
By the gcd condition, $\lambda=1$. Hence every automorphism lies in the deck group.

It remains to treat $(p,N)=(3,4)$. Take
\begin{equation}\label{eq:special34}
 X:\qquad y^3-y=x^2+x+x^{-2}.
\end{equation}
Then $g(X)=4$ and $\gamma(X)=2$, i.e.\ $(r,N)=(1,4)$, so $X$ is not exceptional and every automorphism descends. Since the two lower jumps are both $2$, the induced transformation either has the form $x\mapsto\lambda x$ or $x\mapsto\lambda/x$.

In both cases the left side of \eqref{eq:lift} has pole order at most $2$ at $0$ and $\infty$ and no other poles, while a pole of $h^3-h$ has order divisible by $3$. Thus $h^3-h$ is constant. If $x\mapsto\lambda x$, comparison of the coefficients of $x^2$ and $x$ gives $\lambda^2=u$ and $\lambda=u$, so $\lambda=1$. If $x\mapsto\lambda/x$, then $f(\lambda/x)$ contains the term $\lambda x^{-1}$, whereas $u f(x)$ has no $x^{-1}$-term, a contradiction. Thus no inversion lifts, and the full automorphism group is $C_3$.
\end{proof}

\begin{proposition}\label{prop:N2-impossible}
Let $p$ be odd. If an Artin--Schreier curve has exactly two branch points, both with lower jump $1$, then its geometric automorphism group strictly contains the deck group $C_p$.
\end{proposition}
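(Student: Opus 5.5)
Put the two branch points at $0$ and $\infty$. Since both lower jumps are $1$, the reduced equation becomes
\[
 y^p-y=\alpha x+\frac{\beta}{x}+\delta,\qquad \alpha,\beta\in k^\times,\ \delta\in k.
\]
The constant $\delta$ can be absorbed by replacing $y$ with $y+\epsilon$, where $\epsilon^p-\epsilon=\delta$. By the converse direction of Lemma~\ref{lem:lift}, which needs no non-exceptionality hypothesis, it is enough to exhibit a triple $(M,u,h)$ with $M\ne1$ satisfying \eqref{eq:lift}. We never need to decide whether the curve is exceptional; note that the third exceptional family sits exactly in this stratum when $p=3$.

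Next rescale $x\mapsto cx$ with $c^2=\beta/\alpha$. This is an isomorphism over $k$, and it turns the equation into $y^p-y=e\,(x+1/x)$ with $e=\alpha c\ne0$. For this model take $M(x)=1/x$, $u=1$ and $h=0$. Then $f(1/x)=e(1/x+x)=f(x)$, so \eqref{eq:lift} holds and $(x,y)\mapsto(1/x,y)$ is an automorphism.

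This automorphism is an involution. It is not a deck transformation, because it moves $x$; the deck group consists exactly of the automorphisms fixing $k(x)$ pointwise. Transporting it back through the isomorphism gives $(x,y)\mapsto(\beta/(\alpha x),\,y)$ on the original curve. Indeed, $f(\beta/(\alpha x))=\beta/x+\alpha x=f(x)$ directly, so no rescaling is needed at all.

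There is no real obstacle. The one point needing care is the reduction to the normal form: simple poles have order prime to $p$, and a reduced principal part at a simple pole is just $c/(x-P)$, so after moving the poles to $0$ and $\infty$ the reduced $f$ has exactly the stated shape. It is also worth remarking that the automorphism has order $2$, which is prime to $p$. Hence the geometric automorphism group has order divisible by $2p$, and so it strictly contains $C_p$.
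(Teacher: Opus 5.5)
Your proof is correct and is essentially the paper's argument: you normalize the equation to $y^p-y=\alpha x+\beta/x$ and exhibit the involution $(x,y)\mapsto(\beta/(\alpha x),y)$, which lifts by the converse part of Lemma~\ref{lem:lift} without any non-exceptionality hypothesis. The intermediate rescaling to $e(x+1/x)$ is an unneeded detour, as you note yourself.
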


\begin{proof}
Over $k$ move the two branch points to $0$ and $\infty$. After an Artin--Schreier translation in $y$, the equation has the form
\[
 y^p-y=ax+\frac{b}{x},\qquad a,b\in k^\times.
\]
The involution
\[
 x\longmapsto\frac{b}{ax},\qquad y\longmapsto y
\]
preserves the right-hand side, so it defines an automorphism by the converse part of Lemma~\ref{lem:lift}. It does not fix $k(x)$ pointwise, so it is not in the deck group.
\end{proof}

\section{At least three branch points: the sharp range}\label{sec:many-poles}

Let $r\ge2$ and put
\[
 m:=N-2r.
\]
An Artin--Schreier curve of $p$-rank $r(p-1)$ has $r+1$ poles with jumps $d_1,\ldots,d_{r+1}\ge1$, and
\[
 m=\sum_{i=1}^{r+1}(d_i-1).
\]
In particular $m\ge0$ is necessary. We prove that every $m\ge0$ occurs with full geometric automorphism group $C_p$.

We first record three elementary choices of squarefree polynomials. They will be used only to destroy projective symmetries of the simple branch points.

\begin{lemma}[Asymmetric polynomial choices]\label{lem:asym-poly}
Let $p$ be odd.
\begin{enumerate}[label=\textup{(\alph*)}]
\item For every $n\ge3$ there is a monic squarefree polynomial $A_n\in\Fp[x]$ of degree $n$ whose zero set has trivial stabilizer in the affine group $x\mapsto ax+b$.
\item For every $n\ge1$ there is a squarefree polynomial $B_n\in\Fp[x]$ of degree $n$, with $B_n(0)\ne0$, whose zero set has trivial stabilizer under scalings $x\mapsto ax$.
\item For every $n\ge3$ there is a monic squarefree polynomial $C_n\in\Fp[x]$ of degree $n$ such that the zero set of $C_n$ has trivial affine stabilizer and, for
\[
 \omega_n=d\!\left(\frac1{C_n(x)}\right),
\]
the point $\infty$ is the unique zero of $\omega_n$ of maximal multiplicity.
\end{enumerate}
\end{lemma}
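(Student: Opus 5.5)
The plan is to give explicit polynomials and reduce each stabilizer condition to coefficient comparisons, as in Lemma~\ref{lem:small-one-pole}. If a monic squarefree $P=\sum c_jx^j$ of degree $n$ has its zero set preserved by $x\mapsto ax+b$ (with $a,b\in k$), then $P(ax+b)=a^nP(x)$, since both sides are squarefree of degree $n$ with the same roots and leading coefficient $a^n$. Likewise, a scaling $x\mapsto ax$ preserves the zero set of $P$ exactly when $P(ax)=\lambda P(x)$ for some $\lambda\in k^\times$.

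For (b) I would comparing coefficients directly: $a^jc_j=\lambda c_j$ for every $j$. If $c_0\neq0$ and $c_1\neq0$, then $\lambda=1$ and $a=1$. So I would take $B_n=x^n+\alpha x+\beta$ with $\alpha,\beta\in\Fp^\times$ chosen so that it is squarefree. For $n=1$ take $x+1$; for $n=2$ take $x^2+x+\beta$ with $1-4\beta\neq0$.

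For (a), when $p\nmid n$ take $A_n=x^n+c_{n-2}x^{n-2}+c_{n-3}x^{n-3}+\dots$ with zero $x^{n-1}$-coefficient. The $x^{n-1}$-coefficient of $P(ax+b)$ is $na^{n-1}b$, so $b=0$. Then $a^jc_j=a^nc_j$ for two consecutive indices $j$ with $c_j\neq0$ gives $a^{n-j}=a^{n-j-1}=1$, hence $a=1$. Concretely I would try $A_n=x^n+\alpha x+\beta$ with $\alpha\beta\ne0$ (for $n=3$ the indices $1,0$ are consecutive and $x^2$ is absent). If $p\mid n$, the $x^{n-1}$ argument fails and I would instead use the lowest available coefficient to kill $b$. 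For example, with $A_n=x^n+x^{n-1}+\alpha x+\beta$ the $x^{n-1}$-coefficient gives $a=1$ immediately. Then $P(x+b)=P(x)$ means the zero set is invariant under translation by $b$, hence a union of additive cosets of $\Fp b$ if $b\ne0$. I would rule this out by comparing the $x^{n-2}$-coefficient, namely $\binom{n-1}{1}b=-b$, which forces $b=0$.

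Squarefreeness of these trinomials and quadrinomials over $\Fp$ is handled by the discriminant formula for $x^n+\alpha x+\beta$,
\[
\operatorname{disc}=\pm\bigl(n^n\beta^{n-1}+(-1)^{n-1}(n-1)^{n-1}\alpha^n\bigr),
\]
choosing $\alpha,\beta\in\Fp^\times$ making it nonzero mod $p$. When $p\mid n$ this reduces to $\pm\alpha^n\neq0$ automatically; in the other cases, fixing $\alpha$ and varying $\beta$ over $p-1$ values leaves at least one good choice except in tiny primes, which I would check by hand. For the variant with an $x^{n-1}$ term I would instead check $\gcd(P,P')=1$ directly.

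For (c) I would take $C_n=A_n$ from (a). Then
\[
\omega_n=-\frac{C_n'(x)}{C_n(x)^2}\,dx ,
\]
and its finite zeros are zeros of $C_n'$, each of multiplicity at most $\deg C_n'\le n-1$. At $\infty$, write $1/C_n=t^n/(1+\dots)$ with $t=1/x$. If $p\nmid n$, then $\omega_n$ vanishes to order exactly $n-1$ at $\infty$. This is the \emph{main obstacle}: $n-1$ merely ties the finite bound $\deg C_n'\le n-1$, so I would compute the finite zeros precisely. For $C_n=x^n+\alpha x+\beta$ one has $C_n'=nx^{n-1}+\alpha$, which is separable with simple roots when $p\nmid n(n-1)$. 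Hence $\infty$ (order $n-1\ge2$) is the unique zero of maximal multiplicity. When $p\mid n$ or $p\mid n-1$ I would adjust the lower-order terms so that $C_n'$ still has only zeros of multiplicity $<$ the order at $\infty$. For $p\mid n$, for instance, $C_n'$ has degree $<n-1$, and the expansion of $1/C_n$ in $t$ has $t$-derivative vanishing to order at least $n-1$. I would verify this case by explicit expansion.
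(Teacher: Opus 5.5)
Your treatment of (a) and (b) follows the paper's route: trinomials $x^n+\alpha x+\beta$ with coefficient comparison when $p\nmid n$, and an $x^{n-1}$-term when $p\mid n$. The paper takes $x^n+x^{n-1}+1$, which is your quadrinomial with $\alpha=0$, $\beta=1$; squarefreeness is then immediate from $A_n'=-x^{n-2}$ and $A_n(0)=1$.

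Your existence argument for a squarefree trinomial is wrong as stated. For a fixed $\alpha$ it can happen that \emph{every} $\beta\in\mathbb F_p^\times$ is bad, for any $p$. This occurs whenever $(p-1)\mid(n-1)$ and $\alpha=-n$. For example, take $p=5$, $n=9$, $\alpha=1$: then $P'=4x^8+1$ vanishes on all of $\mathbb F_5^\times$, and $P(\gamma)=2\gamma+\beta$ vanishes at some $\gamma\in\mathbb F_5^\times$.

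The count must be done over pairs, as in the paper. When $n\not\equiv0,1\pmod p$, a common root $\gamma$ of $P$ and $P'$ satisfies $\gamma=-n\beta/((n-1)\alpha)\in\mathbb F_p^\times$. This $\gamma$ determines $(\alpha,\beta)=(-n\gamma^{n-1},(n-1)\gamma^n)$, so at most $p-1$ of the $(p-1)^2$ pairs are bad. When $n\equiv1\pmod p$ there is no common root at all.

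The genuine gap is in (c). Writing ``adjust the lower-order terms'' for $p\mid n-1$ is not an argument, and you have not located where your choice actually fails. Put $n-1=p^eq$ with $p\nmid q$. Then $C_n'=x^{n-1}+\alpha=(x^q+\alpha^{1/p^e})^{p^e}$, whose zeros all have multiplicity exactly $p^e$. If $q>1$, this is less than $n-1=\operatorname{ord}_\infty\omega_n$, so your trinomial from (a) still works.

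If $n-1=p^e$ (for instance $n=p+1$), then $C_n'=(x+\alpha^{1/p^e})^{n-1}$ has a single finite zero of multiplicity $n-1$. This ties with $\infty$, so uniqueness fails for every trinomial. The same happens for any $C_n$ whose derivative is $x^{n-1}+\mathrm{const}$. One must therefore introduce a monomial $x^j$ with $2\le j\le n-1$ and $p\nmid j$, and then re-verify squarefreeness and the trivial affine stabilizer.

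The paper uses $C_n=x^n+x^2+x$:
\begin{itemize}
\item It is squarefree, because $C_n'(0)=1$, and subtracting $C_n/x=x^{n-1}+x+1$ from $C_n'=x^{n-1}+2x+1$ leaves $x$.
\item Its affine stabilizer is trivial, by comparing the $x^{n-1}$, $x^2$ and $x$ coefficients (using $n\ge4$).
\item Since $(C_n')'=2$, all finite zeros of $\omega_n$ are simple, while $\infty$ has order $n-1\ge3$.
\end{itemize}

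Your $p\mid n$ case in (c) is fine. There $\operatorname{ord}_\infty\omega_n=2n-2-\deg C_n'\ge n$, which exceeds every finite multiplicity, since those are at most $\deg C_n'\le n-2$.
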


\begin{proof}
If an affine map preserves the zero set of a monic squarefree polynomial $P$ of degree $n$, then $P(ax+b)=a^nP(x)$, and similarly for scalings.

For (a), suppose first that $p\mid n$ and take
\[
 A_n=x^n+x^{n-1}+1.
\]
Then $A_n'=-x^{n-2}$ and $A_n(0)=1$, so $A_n$ is squarefree. In $A_n(ax+b)=a^nA_n(x)$, the coefficients of $x^{n-1}$ give $a^{n-1}=a^n$, so $a=1$; then, since $\binom n2\equiv0\pmod p$, the coefficients of $x^{n-2}$ give $(n-1)b=-b=0$.

Now suppose $p\nmid n$. Choose $c,d\in\Fp^\times$ such that
\[
 A_n=x^n+cx+d
\]
is squarefree. Such a pair exists. Indeed, if a common root $\alpha$ of $A_n$ and $A_n'$ exists, then
\[
 c=-n\alpha^{n-1},\qquad d=(n-1)\alpha^n.
\]
If $n\equiv1\pmod p$ this contradicts $d\ne0$; otherwise $\alpha=-nd/((n-1)c)\in\Fp^\times$, so at most $p-1$ of the $(p-1)^2$ pairs $(c,d)$ are bad. For a good pair, the $x^{n-1}$-coefficient of $A_n(ax+b)=a^nA_n(x)$ gives $na^{n-1}b=0$, so $b=0$ (here $n\ge3$), while the coefficients of $x$ and $1$ give $a^{n-1}=a^n=1$, hence $a=1$.

For (b), take $B_1=x-1$. For $n\ge2$ with $p\mid n$, take $B_n=x^n+x+1$, which is squarefree because $B_n'=1$. If $p\nmid n$, choose a squarefree $B_n=x^n+cx+d$ with $c,d\in\Fp^\times$ by the preceding counting argument. In either case $B_n(0)\ne0$, and from
\[
 B_n(ax)=a^nB_n(x)
\]
the coefficients of $x$ and $1$ give $a^{n-1}=a^n=1$, hence $a=1$ (for $n=1$ the constant term alone gives $a=1$).

For (c), note that $\omega_n=-C_n'\,C_n^{-2}\,dx$, so the finite zeros of $\omega_n$ are the zeros of $C_n'$, with the same multiplicities, and the order of $\omega_n$ at $\infty$ is $n-1$ if $p\nmid n$ and at least $n$ if $p\mid n$. If $p\mid n$, use $C_n=x^n+x^{n-1}+1$. Then
\[
 C_n'=-x^{n-2},
\]
so $\omega_n$ has a zero of order $n-2$ at $0$ and a zero of order $n$ at $\infty$; the latter is unique of maximal multiplicity. If $p\nmid n$ and $n-1$ is not a power of $p$, use a squarefree $C_n=x^n+cx+d$ as in (a). Write $n-1=p^e q$ with $q>1$ and $(q,p)=1$. The finite zeros of
\[
 C_n'=nx^{n-1}+c
\]
have multiplicity $p^e<n-1$, whereas $\omega_n$ has a zero of order $n-1$ at $\infty$. Finally, if $n-1=p^e$ with $e\ge1$, take
\[
 C_n=x^n+x^2+x.
\]
This polynomial is squarefree: $C_n'=x^{n-1}+2x+1$ does not vanish at $0$, and if $\alpha\ne0$ were a common root of $C_n/x=x^{n-1}+x+1$ and $C_n'$, subtraction would give $\alpha=0$. Its affine stabilizer is trivial by comparing the $x^{n-1}$, $x^2$, and $x$ coefficients (note $n\ge4$), and $C_n'$ is squarefree because $(C_n')'=2$. Thus all finite zeros of $\omega_n$ are simple while the zero at $\infty$ has order $n-1\ge3$. This proves (c).
\end{proof}

The only numerical boundary at which the first Valentini--Madan exceptional family can occur is the ordinary case $r=p-1$, $N=2p-2$. For $p\ge5$ we use the following explicit asymmetric $p$-point configuration.

\begin{lemma}\label{lem:p-point-set}
Let $p\ge5$ and
\[
 Q_p(x)=x^p+x^2+x+1.
\]
Then $Q_p$ is squarefree and the stabilizer of its zero set in $\PGL_2(\Fpb)$ is trivial.
\end{lemma}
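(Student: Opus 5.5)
The plan is to verify squarefreeness directly from the derivative, and then to determine the stabilizer by writing down the polynomial identity that any stabilizing element of $\PGL_2(\Fpb)$ must satisfy and comparing coefficients.

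\emph{Squarefreeness.} Since $p x^{p-1}=0$ in characteristic $p$, we have $Q_p'=2x+1$. Its only root is $x=-1/2\in\Fp$, and there $(-1/2)^p=-1/2$. Hence
\[
Q_p(-1/2)=-\tfrac12+\tfrac14-\tfrac12+1=\tfrac14\neq0,
\]
so $Q_p$ and $Q_p'$ have no common root and $Q_p$ is squarefree.

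\emph{Reduction to a polynomial identity.} Suppose $M(x)=(ax+b)/(cx+d)$, with $ad-bc\ne0$, preserves the zero set $Z$ of $Q_p$. Since $\infty\notin Z$ and $Z$ is preserved, clearing denominators gives
\[
(cx+d)^pQ_p(M(x))=\lambda Q_p(x),\qquad \lambda\in k^\times .
\]
By Frobenius, $(ax+b)^p=a^px^p+b^p$ and $(cx+d)^p=c^px^p+d^p$. All remaining terms collect into
\[
G(x):=(ax+b)^2(cx+d)^{p-2}+(ax+b)(cx+d)^{p-1}=(ax+b)(cx+d)^{p-2}\bigl((a+c)x+(b+d)\bigr).
\]
So the identity says $G=\alpha x^p+R(x)$ with $\deg R\le2$; that is, $G$ has no monomials of degree $3,\dots,p-1$.

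\emph{Ruling out $c\ne0$.} This is the main step, and it is where $p\ge5$ enters.
\begin{itemize}[leftmargin=*]
\item If $c\ne0$ and $d\ne0$, then $G$ has a root $-d/c\neq0$ of multiplicity at least $p-2\ge3$. Hence $G'=R'$, which has degree $\le1$, has a root of multiplicity at least $2$ there. This forces $R'=0$, so $G=\alpha x^p+\beta$ lies in $k[x^p]$.
\begin{itemize}
\item If $\alpha\ne0$, then $G=\alpha(x+\gamma)^p$ for some $\gamma$. Unique factorization then forces $ax+b$ to be proportional to $cx+d$, contradicting invertibility.
\item If $\alpha=0$, then $G$ is constant. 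Since $G$ has degree at least $p-2$ unless a factor vanishes, we would need $(a+c,b+d)=(0,0)$, which again contradicts $ad-bc\ne0$.
\end{itemize}
\item If $d=0$ and $c\ne0$, then $G=c^{p-2}x^{p-2}(ax+b)((a+c)x+b)$. Its $x^{p-2}$-coefficient is $c^{p-2}b^2$, and $3\le p-2\le p-1$, so this coefficient must vanish. Hence $b=0$, which contradicts $ad-bc=-bc\ne0$.
\end{itemize}
Thus $c=0$, and $M$ is affine, $M(x)=ax+b$.

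\emph{The affine case.} Now the identity reads $Q_p(ax+b)=\lambda Q_p(x)$. Comparing coefficients of $x^p$, $x^2$, $x$ and $1$ gives
\[
a^p=\lambda,\qquad a^2=\lambda,\qquad 2ab+a=\lambda,\qquad b^p+b^2+b+1=\lambda .
\]
The goal is to deduce $a=1$ and $b=0$ from these four equations. I expect this elimination to be the only genuinely fiddly computation.

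I would first substitute $\lambda=a^2$ to get $2b+1=a$, so $b=(a-1)/2$. Substituting into the constant-term equation gives a polynomial relation in $a$ alone. I would combine this with $a^p=a^2$, using Frobenius on $b^p=\bigl((a-1)/2\bigr)^p$ together with $2^p=2$, to rewrite $b^p$ in terms of $a^2$. This should collapse to a low-degree equation in $a$, whose relevant root is $a=1$, giving $b=0$. If extra roots survive, I would check each of them against $a^p=a^2$ (equivalently $a^{p-2}=1$) and discard those that fail.
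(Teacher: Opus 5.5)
Your proposal is correct, and it reaches the key step $c=0$ by a different route from the paper.

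\textbf{How the paper does it.} The paper homogenizes to $F(X,Z)$ and computes the coefficient of $X^jZ^{p-j}$ for $3\le j\le p-2$ from the congruences $\binom{p-1}{j}\equiv(-1)^j$ and $\binom{p-2}{j}\equiv(-1)^j(j+1)$. This gives $(-1)^jC^{j-2}D^{p-j-2}\Delta\bigl(j\Delta-(AD+BC+CD)\bigr)$.
\begin{itemize}
\item For $p\ge7$, the two values $j=3,4$ are incompatible when $CD\ne0$.
\item The value $j=p-2$ handles $D=0$.
\item For $p=5$ only $j=3$ is available, so a separate computation with the $X^4Z$ and $X^2Z^3$ coefficients is needed.
\end{itemize}

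\textbf{How your route differs.} You factor $G=(ax+b)(cx+d)^{p-2}\bigl((a+c)x+(b+d)\bigr)$ and use multiplicity. A root of order $\ge p-2\ge3$ of $G=\alpha x^p+R$ forces a double root of $G'=R'$, which has degree $\le1$. This works uniformly for all $p\ge5$ and needs no binomial bookkeeping. In particular it handles $p=5$ with no extra work. The paper's computation buys an explicit formula for every middle coefficient, at the cost of that case split.

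\textbf{A shortcut.} Your argument can be shortened further. The identity shows that the $x$-coefficient of $R$ is $\lambda\ne0$, so $R'=2\lambda x+\lambda\ne0$. This gives a contradiction immediately for every $c\ne0$, including when the root $-d/c$ is $0$. So the subcases $\alpha\ne0$ and $\alpha=0$, and the separate treatment of $d=0$, are unnecessary, though each is correct as you wrote it.

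\textbf{The affine case.} This is the same as the paper's. The elimination you only sketched closes as you predicted. With $b=(a-1)/2$ and $b^p=(a^p-1)/2=(a^2-1)/2$, the constant-term equation becomes $3a^2+1=4a^2$, so $a^2=1$. Since $a^{p-2}=1$ and $p-2$ is odd, $a=-1$ is excluded, leaving $a=1$ and $b=0$.
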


\begin{proof}
Since $Q_p'(x)=2x+1$ and, for $a\in\Fp$, $Q_p(a)=(a+1)^2$, the only possible common root of $Q_p$ and $Q_p'$ would be $a=-1/2$, where $Q_p(a)=1/4\ne0$. Hence $Q_p$ is squarefree.

Let
\[
 F(X,Z)=X^p+X^2Z^{p-2}+XZ^{p-1}+Z^p.
\]
Since $F(1,0)\ne0$, the zero set of $F$ in $\PP$ is the zero set of $Q_p$. If $M=\begin{pmatrix}A&B\\ C&D\end{pmatrix}$ stabilizes it, then, since $F$ is squarefree,
\begin{equation}\label{eq:binary-stab}
 F(AX+BZ,CX+DZ)=\kappa F(X,Z)
\end{equation}
for some $\kappa\ne0$. Put $\Delta=AD-BC\ne0$. For $3\le j\le p-2$, only
\[
 (AX+BZ)^2(CX+DZ)^{p-2}
 \quad\text{and}\quad
 (AX+BZ)(CX+DZ)^{p-1}
\]
contribute to $X^jZ^{p-j}$. Using
\[
 \binom{p-1}{j}\equiv(-1)^j,
 \qquad
 \binom{p-2}{j}\equiv(-1)^j(j+1)
 \pmod p,
\]
and the analogous formulas with $j-1$ and $j-2$, the coefficient is
\begin{align*}
 (-1)^j C^{j-2}D^{p-j-2}
 \bigl(& (j+1)B^2C^2-2jABCD +(j-1)A^2D^2 \\
 &{}+BC^2D-ACD^2\bigr).
\end{align*}
Factoring the expression in parentheses gives
\begin{equation}\label{eq:middle-coeff}
 (-1)^j C^{j-2}D^{p-j-2}\Delta
 \bigl(j\Delta-(AD+BC+CD)\bigr),
\end{equation}
while the coefficient of $X^jZ^{p-j}$ on the right of \eqref{eq:binary-stab} is $0$.

Assume first $p\ge7$. If $C,D\ne0$, the cases $j=3$ and $j=4$ in \eqref{eq:middle-coeff} force simultaneously
\[
 3\Delta=AD+BC+CD=4\Delta,
\]
a contradiction. Hence $C=0$ or $D=0$. If $D=0$, then $B,C\ne0$ and the case $j=p-2$ gives the coefficient $\pm B^2C^{p-2}\ne0$, again a contradiction. Thus $C=0$. Scaling the matrix, take $D=1$. Comparing the coefficients of $X^p$, $X^2Z^{p-2}$, $XZ^{p-1}$, and $Z^p$ gives
\[
 \kappa=A^p=A^2,
 \qquad
 A(2B+1)=A^2,
 \qquad
 B^p+B^2+B+1=A^2.
\]
Thus $B=(A-1)/2$, and $B^p=(A^p-1)/2=(A^2-1)/2$; substituting into the last identity gives $(3A^2+1)/4=A^2$, i.e.\ $A^2=1$. Since also $A^{p-2}=1$ and $p-2$ is odd, $A=1$ and then $B=0$.

It remains to treat $p=5$. The coefficient of $X^3Z^2$ gives either $C=0$ or
\[
 AD+BC+CD=3\Delta.
\]
The first case is handled exactly as above. In the second case scale $C=1$. Since $\Delta=AD-B$, the displayed relation becomes
\[
 B=D(1-2A).
\]
Substituting this into the coefficient of $X^4Z$ gives
\[
 2AB+3A^2D+4AD+B
 =D(1-A-A^2)=0.
\]
Since $D=0$ would force $B=0$ and hence $\Delta=0$, we have
\[
 A^2+A-1=0.
\]
The coefficient of $X^2Z^3$ in the transformed form is
\[
 3B^2D+ABD^2+A^2D^3+BD^2+4AD^3
   =D^3(A^2+A+4)=0,
\]
where the last equality uses $A^2+A=1$. On the right-hand side of \eqref{eq:binary-stab}, however, the coefficient of $X^2Z^3$ is $\kappa\ne0$, a contradiction. Hence only the identity survives for $p=5$ as well.
\end{proof}

We also need one explicit curve at the small exceptional numerical point $(p,r,N)=(3,2,4)$.

\begin{lemma}\label{lem:p3-boundary}
Over $\mathbb F_3$, the Artin--Schreier curve
\begin{equation}\label{eq:p3-boundary}
 y^3-y=\frac{x^2}{x^3+2x^2+1}
\end{equation}
has genus $4$, $3$-rank $4$, and full geometric automorphism group $C_3$.
\end{lemma}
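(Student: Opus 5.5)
The plan is to compute the branch data, use Lemma~\ref{lem:E-rigid}(c) to show the curve is not exceptional, and then use the $S_3$-table \eqref{eq:S3-action} to show that no nontrivial permutation of the branch points lifts to an automorphism.

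\emph{Branch data.} Put $D(x)=x^3+2x^2+1$. Evaluating at $0,1,2$ gives $1,1,2$, so $D$ has no root in $\mathbb F_3$. Being a cubic, it is therefore irreducible over $\mathbb F_3$, and its roots $\rho,\rho^3,\rho^9$ lie in $\mathbb F_{27}$ and are distinct. Since $\deg x^2<\deg D$, the point $\infty$ is not a pole. Hence $f=x^2/D$ has exactly three simple poles and is reduced. Proposition~\ref{prop:formulas} then gives $s=3$, $N=-2+3\cdot2=4$, $g=4$ and $\gamma=4$.

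\emph{Partial fractions.} In characteristic $3$ we have $D'=3x^2+4x=x$. So the residue of $f$ at a root $\rho_i$ is $\rho_i^2/D'(\rho_i)=\rho_i$, and
\[
 f=\sum_{i}\frac{\rho_i}{x-\rho_i}.
\]
The degrees already match, so no constant is needed.

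\emph{Non-exceptionality.} The pair $(r,N)=(2,4)$ equals $(p-1,2p-2)$ for $p=3$, so Lemma~\ref{lem:exceptional-invariants} does not decide the question, and I would use Lemma~\ref{lem:E-rigid}(c) instead. Order the poles as $(\rho,\rho^3,\rho^9)$ and let $\Phi(z)$ be the Möbius map sending $\infty,0,1$ to them. Write $f(\Phi(z))=\alpha z+\beta/z+\gamma/(z-1)+\delta$. Each of $\alpha,\beta,\gamma$ equals the residue $\rho_i$ times the derivative factor of $\Phi^{-1}$ at the corresponding pole. Concretely, the coefficient at a finite point $z_0=\Phi^{-1}(\rho_i)$ is $\rho_i/\Phi'(z_0)$, and at $z=\infty$ one uses the leading behaviour of $\Phi$. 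I would write $\Phi(z)=\bigl(\rho^9(\rho^3-\rho)z-\rho^3(\rho^9-\rho)\bigr)/\bigl((\rho^3-\rho)z-(\rho^9-\rho)\bigr)$, or a similar explicit form, compute the triple in $\mathbb F_{27}$ using $\rho^3=-2\rho^2-1=\rho^2-1$, and check that it is not of the form $(-s,s,s)$. It suffices to check, for instance, that $\beta\ne\gamma$ or $\alpha\ne-\beta$. Once this holds, Proposition~\ref{prop:VM} applies.

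\emph{Rigidity.} By Lemma~\ref{lem:lift}, an automorphism gives $(M,u,h)$ with $f\circ M-uf=h^3-h$. The left side has only simple poles, so $h$ is constant, exactly as in the proof of Lemma~\ref{lem:E-rigid}(b). By Lemma~\ref{lem:jumps}, $M$ permutes the three poles, all of which have jump $1$. In the $z$-coordinate $M$ is therefore one of the six maps $S$ in \eqref{eq:S3-action}, and we need $F\circ S=u(\alpha,\beta,\gamma)$ with $u=\pm1$ and $F=\alpha z+\beta/z+\gamma/(z-1)$. I would run through the table.
\begin{itemize}
\item The transposition rows give linear conditions such as $\alpha=-u\alpha$ and $\gamma=-u\beta$. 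These force either a vanishing coefficient, which is impossible since each coefficient is a nonzero residue multiple, or the equality of two coefficients up to sign.
\item The $3$-cycle rows give conditions such as $(\beta,\gamma,\alpha)=\pm(\ldots)$, which force $\alpha,\beta,\gamma$ to agree up to fixed signs. This is in particular close to the excluded pattern $(-s,s,s)$.
\end{itemize}
So the explicit triple from the previous step must satisfy none of these sign-equalities except the identity row with $u=1$. That leaves only $M=1$, and the automorphism group is $T\cong C_3$.

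\emph{Main obstacle.} The main work is the exact computation of $(\alpha,\beta,\gamma)$ in $\mathbb F_{27}$. A cleaner route is Galois-theoretic. Frobenius cyclically permutes the poles and fixes $f$, and so it maps the triple to a related triple. Ratios such as $\beta/\gamma$ and $\alpha/\beta$ should then be expressible through cross-ratio-type quantities of $\rho$. One then verifies that they avoid $\pm1$, which rules out both the exceptional pattern and every nontrivial row of \eqref{eq:S3-action}. I would first try this ratio computation, with the explicit arithmetic in $\mathbb F_{27}$ as a fallback.
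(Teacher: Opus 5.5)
Your route is the paper's: three Frobenius-conjugate simple poles give $g=\gamma=4$. The pair $(r,N)=(2,4)$ excludes the second and third exceptional families by Lemma~\ref{lem:exceptional-invariants}, so that lemma does settle those two. The first family is handled by Lemma~\ref{lem:E-rigid}(c), and rigidity is read off from \eqref{eq:S3-action}. Your reduction of the table is correct. The three transpositions lift only if $\beta=\gamma$, $\beta=-\alpha$, or $\gamma=-\alpha$ respectively (each with $u=-1$). The two $3$-cycles lift only for the pattern $(-s,s,s)$ with $u=1$. The observation $D'=x$, hence $f=\sum_i\rho_i/(x-\rho_i)$, is a nice simplification. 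One small slip: your displayed $\Phi$ sends $\infty,0,1$ to $\rho^9,\rho^3,\rho$, not to $\rho,\rho^3,\rho^9$. This is harmless, since everything above is insensitive to the ordering.

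The gap is that you never determine $(\alpha,\beta,\gamma)$, so none of the required inequalities is verified. That check is the only curve-specific content of the lemma, and it is not a formality. Take the ordering $(\rho,\rho^3,\rho^9)\mapsto(\infty,0,1)$ and conjugate the coefficients of $f\circ\Phi$ by Frobenius. This yields $f\circ\Phi\circ S$ with $S(z)=1/(1-z)$, so the table gives $(\alpha^3,\beta^3,\gamma^3)=(-\beta,\gamma,-\alpha)$, that is,
\[
 (\alpha,\beta,\gamma)=(A,-A^3,-A^9),\qquad A:=\alpha .
\]
This holds for \emph{every} $f\in\mathbb F_3(x)$ with simple poles exactly at $\rho,\rho^3,\rho^9$. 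All the needed inequalities are then equivalent to $A\notin\mathbb F_3$. If instead $A\in\mathbb F_3^\times$, the triple is $(-s,s,s)$ and the $3$-cycle lifts with $u=1$ and $h$ constant, so the lemma would be false. The value of $A$ is therefore exactly what decides the statement.

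The computation is short. One has $A=\rho\,(\rho^9-\rho^3)/\bigl((\rho-\rho^3)(\rho^9-\rho)\bigr)$. Using $\rho^3=\rho^2+2$ and $\rho^9=2\rho^2+2\rho+2$, this simplifies to $A=\rho^{-1}=\rho+2\rho^2\notin\mathbb F_3$. This matches the paper, whose triple is $(\rho+2\rho^2,\,1+2\rho+\rho^2,\,2+2\rho+\rho^2)=(\rho^{-1},-\rho^{-3},-\rho^{-9})$ in your notation for the root. The paper then checks that $\pm a,\pm b,\pm c$ are pairwise distinct. With this computation inserted your argument is complete; without it, the proposal is a correct plan rather than a proof.
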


\begin{proof}
The cubic $q=x^3+2x^2+1$ has no root in $\mathbb F_3$, hence is irreducible and squarefree. Thus \eqref{eq:p3-boundary} has three simple poles and is regular at $\infty$, so Proposition \ref{prop:formulas} gives $g=4$ and $\gamma=4$.

Let $\alpha$ be a root of $q$, so $\alpha^3=\alpha^2+2$, and put $\beta=\alpha^3$, $\gamma=\alpha^9$. Then
\[
 q(x)=(x-\alpha)(x-\beta)(x-\gamma).
\]
The change of coordinate
\[
 z=(\alpha+2)\frac{x-\beta}{x-\alpha}
\]
sends $\alpha,\beta,\gamma$ to $\infty,0,1$, respectively, and its inverse is
\[
 x=\frac{\alpha z-(\alpha+2)\beta}{z-(\alpha+2)}.
\]
Consequently the partial-fraction expansion of $x^2/q(x)$ has the form
\[
 \frac{x^2}{q(x)}=az+\frac b z+\frac c{z-1}+\delta
 \qquad(\delta\in\mathbb F_{27}).
\]
The three principal coefficients can be read off directly at the poles:
\begin{align*}
 a&=\frac{\alpha^2}
 {(\alpha+2)(\alpha-\beta)^2(\alpha-\gamma)},\\
 b&=\frac{(\alpha+2)\beta^2}
 {(\beta-\alpha)^2(\beta-\gamma)},\\
 c&=\frac{(\alpha+2)(\beta-\alpha)\gamma^2}
 {(\gamma-\alpha)^3(\gamma-\beta)}.
\end{align*}
Using $\alpha^3=\alpha^2+2$ these simplify, in the basis $1,\alpha,\alpha^2$ of $\mathbb F_{27}$, to
\[
 a=\alpha+2\alpha^2,
 \qquad
 b=1+2\alpha+\alpha^2,
 \qquad
 c=2+2\alpha+\alpha^2.
\]
Since we work over $k=\Fpb$, the constant $\delta$ is removed by replacing $y$ by $y+t$ with $t^3-t=\delta$.

\emph{Non-exceptionality.} The pair $(r,N)=(2,4)$ excludes the second and third families. If the curve were birational to $y^3-y=a'/(x^3-x)$, Lemma~\ref{lem:E-rigid}(c) would give $b=c$. Since $b\ne c$, Proposition \ref{prop:VM} applies.

\emph{Rigidity.} Every geometric automorphism induces a fractional linear transformation permuting $\{0,1,\infty\}$, and since all poles are simple, $h$ is constant in \eqref{eq:lift}. By \eqref{eq:S3-action} the six such transformations act on the coefficient triple by
\[
 (a,b,c),\ (-a,-c,-b),\ (b,a,-c),\ (-b,c,-a),\ (c,-b,a),\ (-c,-a,b),
\]
and a transformation lifts only if its triple equals $u(a,b,c)$ with $u=\pm1$. The six elements
\[
 a,-a,b,-b,c,-c
\]
of $\mathbb F_{27}$ are pairwise distinct. Thus the four triples beginning with $b,-b,c,-c$ cannot equal $u(a,b,c)$. The remaining nonidentity triple begins with $-a$, so it would force $u=-1$; its second component is then $-c$, whereas the second component of $-(a,b,c)$ is $-b$, and $b\ne c$. Hence none of the five nonidentity permutations lifts, and every geometric automorphism belongs to the deck group $C_3$.
\end{proof}

\begin{lemma}\label{lem:triple}
Let $p$ be an odd prime.
\begin{enumerate}[label=\textup{(\alph*)}]
\item If $p\ge5$ and $m\ge4$, there exist integers $a,b,c\ge2$, prime to $p$, not all equal, such that
\[
 a+b+c=m+3.
\]
\item If $p=3$ and $m\ge5$, the same conclusion holds.
\end{enumerate}
\end{lemma}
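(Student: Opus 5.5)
We prove both parts by explicit choices, splitting according to the residue of $S:=m+3$ modulo $p$. In each case we take two of the three integers small, namely in $\{2,3,4\}$, and let the third absorb the remainder.

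For (a), with $p\ge5$, the integers $2,3,4$ are all prime to $p$. We try the triple $(a,b,c)=(2,2,S-4)$. It is admissible as soon as $p\nmid S-4$ and $S-4\ge2$ with $S-4\ne2$. These conditions hold since $m\ge4$ gives $S-4=m-1\ge3$. If instead $p\mid S-4$, we use $(2,3,S-5)$. Then $S-5\equiv-1\pmod p$, so $p\nmid S-5$, and $S-5=m-2\ge2$. The triple is not all equal because $2\ne3$. This settles (a), and the bound $m\ge4$ is used only to guarantee $S-5\ge2$ and $S-4\ge3$.

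For (b), with $p=3$, the only small values prime to $3$ are $2$ and $4$, and the residues of $2+2=4$, $2+4=6$ and $4+4=8$ are $1$, $0$ and $2$ modulo $3$. We take $c:=S-(a+b)$ and choose the pair $(a,b)$ so that $c\not\equiv0\pmod 3$.
\begin{itemize}
\item If $S\equiv0$, we use $(a,b)=(2,4)$. Then $c\equiv0$, which fails, so we switch to $(2,2)$, giving $c\equiv2$, or to $(4,4)$, giving $c\equiv1$. Take $(2,2,S-4)$: we need $S-4\ge2$ and $S-4\ne2$. Since $m\ge5$, we have $S-4=m-1\ge4$.
\item If $S\equiv1$, use $(2,2,S-4)$ again. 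Then $c\equiv0$, which fails, so use $(4,4,S-8)$ with $c\equiv2$. This requires $S-8=m-5\ge2$, i.e.\ $m\ge7$. Since $S\equiv1$ means $m\equiv1\pmod3$, the residue class begins at $m=7$, and $m=7$ gives the triple $(4,4,2)$, which is fine.
\item If $S\equiv2$, take $(2,2,S-4)$ with $c\equiv1$. We need $S-4=m-1\ge2$ and $\ne2$. The case $m=5$ gives $S=8$ and the triple $(2,2,4)$, which is fine.
\end{itemize}
In the last case we also need the triple not all equal. All three entries equal $2$ only when $S=6$, i.e.\ $m=3$, which is excluded.

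The main obstacle is purely bookkeeping: checking the lower bound $c\ge2$ and the not-all-equal condition at the smallest admissible $m$ in each residue class. This is also where the thresholds $m\ge4$ and $m\ge5$ come from. For $p=3$ and $m=4$ we have $S=7$, and no triple of integers $\ge2$ prime to $3$ sums to $7$, since the possible entries are $2,4,5,7,\dots$ and no combination works. This is why (b) excludes $m=4$. We will verify these small cases directly.
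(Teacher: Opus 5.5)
Your proof is correct and is essentially the paper's argument: it uses the same triples $(2,2,m-1)$ and $(2,3,m-2)$ for $p\ge5$, and $(2,2,m-1)$ for $p=3$ with $m\not\equiv1\pmod 3$. The only difference is the case $p=3$, $m\equiv1\pmod3$, where you take $(4,4,m-5)$ and the paper takes $(2,4,m-3)$; yours is valid because $m\ge7$ there, and its entries are not all equal because $m-5\equiv2\not\equiv4\pmod3$, a check you left implicit.
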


\begin{proof}
Assume first that $p=3$. If $m\not\equiv1\pmod3$, take $(a,b,c)=(2,2,m-1)$. If $m\equiv1\pmod3$, take $(a,b,c)=(2,4,m-3)$. For $m\ge5$ all entries are at least $2$, prime to $3$, and not all equal.

Now let $p\ge5$. If $p\nmid m-1$, take $(a,b,c)=(2,2,m-1)$. If $p\mid m-1$, take $(a,b,c)=(2,3,m-2)$. Again the entries have the required properties for $m\ge4$.
\end{proof}

\begin{proposition}\label{prop:many-poles}
Let $p$ be an odd prime and let $r\ge2$. For every integer $N\ge2r$ there exists an Artin--Schreier curve $X/\Fp$ such that
\[
 g(X)=\frac{p-1}{2}N,
 \qquad
 \gamma(X)=r(p-1),
 \qquad
 \Aut_{\Fpb}(X)\cong C_p.
\]
\end{proposition}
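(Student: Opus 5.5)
Put $m=N-2r\ge0$. Our curves will have $r+1$ poles. Most of them will be simple, and a few carefully chosen higher-order poles will absorb the excess $m$. The point configuration is drawn from the asymmetric polynomials of Lemma~\ref{lem:asym-poly}, so that no nontrivial fractional linear map can survive. In every case the first step is non-exceptionality. By Lemma~\ref{lem:exceptional-invariants} the only dangerous pairs with $r\ge2$ are $(r,N)=(p-1,2p-2)$, i.e.\ $m=0$ with $r=p-1$. For $p=3$ this is $(2,4)$, which Lemma~\ref{lem:p3-boundary} already settles. For $p\ge5$ we use the branch locus $Q_p=0$ with all poles simple, say $y^p-y=1/Q_p(x)$ (reduced, regular at $\infty$). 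Lemma~\ref{lem:E-rigid}(b) then says an exceptional model would have a branch locus with nontrivial projective stabilizer, contradicting Lemma~\ref{lem:p-point-set}. So this curve is non-exceptional, and Lemma~\ref{lem:jumps} forces any automorphism to fix the zero set of $Q_p$, hence $M=1$. Outside this boundary, Proposition~\ref{prop:VM} applies directly, and every automorphism is $(M,u,h)$ with $M$ preserving the branch points together with their jumps.

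\emph{Case $m\ge1$ with a single big jump.} Take $y^p-y=x^{d}+1/A_r(x)$ with $d=m+1$ when $p\nmid m+1$. The pole at $\infty$ is then the unique one of jump $d\ge2$, so $M$ fixes $\infty$ and is affine. $M$ also permutes the roots of $A_r$ (these are simple poles), and Lemma~\ref{lem:asym-poly}(a) forces $M=1$ when $r\ge3$. For $r=2$, where $A_2$ is unavailable, I would instead put the simple poles at the roots of $B_2$ (or at $0$ and a root of $B_1$) and the big pole at $\infty$ and/or $0$, and use Lemma~\ref{lem:asym-poly}(b). When $p\mid m+1$, split the excess over two or three higher-order poles using Lemma~\ref{lem:triple} (jumps $a,b,c\ge2$, prime to $p$, not all equal, sum $m+3$). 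Place the poles at $\infty$, $0$ and $1$ (or just $\infty$ and $0$ with distinct jumps $a\ne b$, sum $m+2$) and the remaining simple poles at the roots of a $B_n$. Because the jumps are not all equal, $M$ fixes at least one of the big poles; the remaining simple poles then pin $M$ down via Lemma~\ref{lem:asym-poly}(b) or (a). Small $m$ (namely $m\le3$, resp.\ $m\le4$ for $p=3$) with $p\mid m+1$ will be handled by hand with two higher poles of distinct jumps.

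\emph{Case $m=0$, $r\ne p-1$.} All $r+1$ poles are simple. I would take the branch locus to be $\infty$ together with the roots of $C_r$, i.e.\ $y^p-y=x+1/C_r(x)$, when $r\ge3$. Jumps no longer distinguish points. The plan is to use the lift equation~\eqref{eq:lift} with constant $h$ (all poles simple), which gives $f\circ M=uf+\text{const}$; applying $d$ yields $M^*(df)=u\,df$. So $M$ preserves the divisor of the differential $df=dx+\omega_r$. I then want to compare with $\omega_r$ and use Lemma~\ref{lem:asym-poly}(c), which says $\infty$ is the unique zero of maximal order, to force $M(\infty)=\infty$; affine rigidity of $C_r$ then gives $M=1$. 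For $r=2$ I would use three poles $\infty,0,\beta$ with residues chosen via the $S_3$ table~\eqref{eq:S3-action} (generic coefficients in $\Fp$ for $p\ge5$), exactly as in Lemma~\ref{lem:p3-boundary}.

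The main obstacle is this all-simple-poles case. There the jumps give no information, and extracting a fixed point of $M$ requires the differential/residue argument to be set up carefully: the zeros of $df$ are not literally those of $\omega_r$, so the coefficients must be chosen so that the extra term does not destroy the uniqueness of the maximal zero. The $p=3$, $m$ small cases will also need ad hoc choices.
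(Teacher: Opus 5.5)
Your treatment of the ordinary boundary $r=p-1$ agrees with the paper. Two of your positive-$m$ models are sound and in fact simpler than the paper's Case~4: $x^{m+1}+1/A_r(x)$ for $r\ge3$, $p\nmid m+1$; and the two-pole variant $x^{m}+x^{-2}+1/B_{r-1}(x)$ for $p\mid m+1$, $(p,m)\ne(3,2)$, where the distinct jumps $m\ne2$ fix $0$ and $\infty$ and Lemma~\ref{lem:asym-poly}(b) finishes. However, the case $m=0$ that you flag as the main obstacle fails for the model you chose; it is not a matter of tuning coefficients. For $f=x+1/C_r$ the point $\infty$ is a branch point, so
\[
 df=\frac{C_r^2-C_r'}{C_r^2}\,dx
\]
has a double \emph{pole} at $\infty$. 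Its zeros are the roots of $C_r^2-C_r'$, which have nothing to do with Lemma~\ref{lem:asym-poly}(c), so no fixed point of $M$ comes out. The missing idea is to keep $\infty$ off the branch locus: take $f=1/C_{r+1}(x)$, with $n=r+1\ge3$ finite simple poles. Then $h$ is constant and $M^*df=u\,df$ with $df=\omega_{r+1}$ exactly. Hence $M$ fixes the unique zero $\infty$ of maximal order, is affine, and is trivial because the roots of $C_{r+1}$ have trivial affine stabilizer. This also covers $r=2$ for $p\ge5$, so your $S_3$ residue computation is unnecessary. If you keep it, note that $u$ then ranges over all of $\Fp^\times$, not just $\pm1$ as in Lemma~\ref{lem:p3-boundary}.

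Second, in the small cases you invoke Lemma~\ref{lem:asym-poly}(b) where it does not apply. That lemma controls only scalings, so it needs $M$ to fix two points. With one jump $\ge2$ at $\infty$ and $r=2$, $M$ is merely affine, and the two simple poles $\{\alpha,\beta\}$ are always exchanged by $x\mapsto\alpha+\beta-x$. With the big pole at $0$ and simple poles at $\infty,1$, the involution $x\mapsto x/(x-1)$ plays the same role. For $r=2$, every admissible jump pattern has two equal jumps exactly when $m\in\{1,2\}$, or when $p=3$ and $m\in\{3,6\}$. In these cases a nontrivial M\"obius map respecting the branch data always exists, and it must be excluded by an explicit coboundary computation in \eqref{eq:lift}, as the paper does with $x^2+x+x^{-1}+(x-1)^{-1}$ and $x^3+x^{-1}+(x-1)^{-1}$.

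Likewise, at $(p,m)=(3,2)$ the jump $3$ is forbidden, so for every $r$ the pattern is forced to be $(2,2,1,\ldots,1)$, and ``two higher poles of distinct jumps'' is impossible. The map $x\mapsto\lambda/x$ exchanging the two jump-$2$ points has to be killed by an asymmetry of principal parts, as in the paper's $x^2+x^{-2}+x^{-1}+1/B_{r-1}$.

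Your three-pole variant with jumps $(2,2,c)$ has the same defect. Lemma~\ref{lem:asym-poly}(a),(b) cannot rule out $x\mapsto1-x$ when there are fewer than three simple poles, and for $r=2$ there are none. It can simply be dropped in favour of the two-pole variant.
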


\begin{proof}
Put $m=N-2r\ge0$. In each case below, $f$ has exactly $r+1$ poles, all of order prime to $p$, with $\sum(d_i-1)=m$; hence Proposition \ref{prop:formulas} gives the required genus and $p$-rank.

\smallskip
\noindent\emph{Case 1: $m=0$.}
All $r+1$ lower jumps must be $1$. First suppose $r\ne p-1$. Put $n=r+1\ge3$ and choose $C_n$ as in Lemma \ref{lem:asym-poly}(c). Set
\[
 f(x)=\frac1{C_n(x)}.
\]
The curve has $n$ simple poles. Since $(r,N)=(r,2r)$ with $r\ge2$ and $r\ne p-1$, it is not exceptional by Lemma~\ref{lem:exceptional-invariants}. Thus every automorphism descends. Since both $f(M(x))$ and $f(x)$ have only simple poles, equation \eqref{eq:lift} forces $h$ to be constant. Differentiating gives
\[
 M^*df=u\,df.
\]
Hence the point map of $M$ preserves the zeros of $df=\omega_n$ with their multiplicities. By Lemma \ref{lem:asym-poly}(c), the unique zero of maximal multiplicity is $\infty$, so $M$ fixes $\infty$ and is affine. Its point action preserves the zeros of $C_n$, whose affine stabilizer is trivial. Hence $M=1$.

Now let $r=p-1$. If $p\ge5$, take
\[
 f(x)=\frac1{Q_p(x)},
 \qquad Q_p=x^p+x^2+x+1.
\]
The second and third exceptional families are excluded by $(r,N)=(p-1,2p-2)$ with $p\ge5$. By Lemma \ref{lem:p-point-set}, the branch set has trivial projective stabilizer, so Lemma~\ref{lem:E-rigid}(b) excludes the first family. Every automorphism therefore descends and must induce a projective transformation stabilizing the branch set, hence the identity; so it lies in $C_p$. The remaining possibility $p=3$, $r=2$ is exactly Lemma \ref{lem:p3-boundary}.

For the remaining cases $m>0$, hence $N>2r$ with $r\ge2$, and Lemma~\ref{lem:exceptional-invariants} shows that none of the Valentini--Madan exceptional families can occur. Proposition~\ref{prop:VM} applies throughout Cases~2--4.

\smallskip
\noindent\emph{Case 2: $m=1$.}
There is one jump $2$ and $r$ simple jumps. If $r=2$, take
\[
 f(x)=x^2+x+\frac1x+\frac1{x-1}.
\]
The unique jump-$2$ point is $\infty$, so a descended base transformation is affine and preserves $\{0,1\}$: it is either the identity or $x\mapsto1-x$. In the latter case the leading term gives $u=1$, while
\[
 f(1-x)-f(x)=-4x+2-\frac2x-\frac2{x-1}
\]
has simple poles, impossible for $h^p-h$. Hence only the identity lifts.

If $r\ge3$, choose $A_r$ as in Lemma \ref{lem:asym-poly}(a) and set
\[
 f(x)=x^2+\frac1{A_r(x)}.
\]
The jump-$2$ point $\infty$ is fixed, so every descended transformation is affine; the simple branch set has trivial affine stabilizer, and again $M=1$.

\smallskip
\noindent\emph{Case 3: $m=2$.}
Assume first $p\ge5$. If $r=2$, take
\[
 f(x)=x^3+\frac1x+\frac1{x-1}.
\]
The only possible nontrivial base transformation is $x\mapsto1-x$. The leading term gives $u=-1$, while
\[
 f(1-x)+f(x)=1-3x+3x^2
\]
is a nonconstant polynomial of degree $2<p$, so it cannot be $h^p-h$. If $r\ge3$, choose $A_r$ as in Lemma \ref{lem:asym-poly}(a) and use
\[
 f(x)=x^3+\frac1{A_r(x)}.
\]
The unique jump-$3$ point $\infty$ is fixed and the affine stabilizer of the simple branch set is trivial.

Now let $p=3$. Then the jump $3$ is not allowed, so there are two jumps $2$. Put $n=r-1$ and choose $B_n$ as in Lemma \ref{lem:asym-poly}(b). Set
\[
 f(x)=x^2+\frac1{x^2}+\frac1x+\frac1{B_n(x)}.
\]
The two jump-$2$ points are $0$ and $\infty$; all other poles are simple. A descended projective transformation preserving $\{0,\infty\}$ is either $x\mapsto\lambda x$ or $x\mapsto\lambda/x$. In the first case the simple branch set and Lemma \ref{lem:asym-poly}(b) force $\lambda=1$. In the second case, the left side of \eqref{eq:lift} has principal part $(\lambda^2-u)x^{-2}-ux^{-1}$ at $0$: the transformed terms $x^2/\lambda^2$ and $x/\lambda$ are polynomials, $1/B_n(\lambda/x)$ vanishes at $0$ because $\deg B_n\ge1$, and $u/B_n(x)$ is regular at $0$ because $B_n(0)\ne0$. Since $u\ne0$, this is a pole of order $1$ or $2$, impossible for an Artin--Schreier coboundary. Hence $M=1$.

\smallskip
\noindent\emph{Case 4: $m\ge3$.}
If $r\ge3$, let $R(x)$ be a squarefree polynomial in $\Fp[x]$ of degree $r-2$ satisfying $R(0)R(1)\ne0$; for $r=2$ omit the term $1/R(x)$ below. Such an $R$ exists: in degree $1$ choose $x-\lambda$ with $\lambda\in\Fp\setminus\{0,1\}$, and in degree at least $2$ take a monic irreducible polynomial.

If $m=3$, set
\[
 f_0(x)=x^2+\frac1{x^2}+\frac1{(x-1)^2},
\]
and put $f=f_0$ for $r=2$ and $f=f_0+1/R(x)$ for $r\ge3$. The points $0,1,\infty$ have jump $2$ and all others jump $1$. The point action on $\{0,1,\infty\}$ lies in $S_3$. For the five nonidentity substitutions
\[
 1-x,\quad x^{-1},\quad (1-x)^{-1},\quad \frac{x}{x-1},\quad \frac{x-1}{x},
\]
comparison of the order-$2$ principal parts gives $u=1$, and a direct calculation yields
\[
\begin{array}{c|c}
M & f_0(M(x))-f_0(x)\\ \hline
1-x & 1-2x\\[1mm]
1/x & 1+\dfrac{2}{x-1}\\[2mm]
1/(1-x) & -2x+2-\dfrac{2}{x}\\[2mm]
x/(x-1) & -2x+3+\dfrac{2}{x-1}-\dfrac{2}{x}\\[2mm]
(x-1)/x & 2+\dfrac{2}{x-1}-\dfrac{2}{x}.
\end{array}
\]
Each row contains a nonzero simple pole or a nonconstant polynomial term of degree $1$, neither of which can occur in $h^p-h$. When a row has a simple pole at $0$ or $1$, both $1/R(x)$ and $1/R(M(x))$ are regular there: the transformation $M$ permutes $0,1,\infty$, we have $R(0)R(1)\ne0$, and $1/R$ is regular at $\infty$. For $M(x)=1-x$, the obstruction is instead the nonconstant polynomial $1-2x$; the additional term $1/R(1-x)-1/R(x)$ is a proper rational function, hence regular at $\infty$, and cannot cancel its degree-one polynomial part. Thus the term involving $R$ cannot remove the displayed obstruction, and $M=1$.

If $(p,m)=(3,4)$, choose a squarefree polynomial $S_r\in\mathbb F_3[x]$ of degree $r-1$, with $S_r(0)\ne0$, whose root set is not stable under $x\mapsto-x$: take $S_2=x-1$, $S_3=x^2+x+2$, and for $r\ge4$ take $S_r=(x-1)Q_{r-2}(x)$ with $Q_{r-2}$ monic irreducible of degree $r-2\ge2$. Set
\[
 f(x)=x^4+\frac1{x^2}+\frac1{S_r(x)}.
\]
The jumps $4$ at $\infty$ and $2$ at $0$ force every descended transformation to be $x\mapsto\lambda x$. Principal-part comparison gives $u=\lambda^{-2}=\lambda^4$, hence $\lambda^6=1$, i.e.\ $(\lambda^2-1)^3=0$, and $\lambda=\pm1$; the chosen simple branch set excludes $\lambda=-1$.

In all remaining cases ($m\ge4$ if $p\ge5$, and $m\ge5$ if $p=3$) choose $a,b,c$ as in Lemma \ref{lem:triple}. If they are pairwise distinct, put
\[
 f(x)=x^c+\frac1{x^a}+\frac1{(x-1)^b}
\]
for $r=2$ and add $1/R(x)$ for $r\ge3$. The three distinct jump labels, all at least $2$, force $0,1,\infty$ to be fixed pointwise, so $M=1$. If exactly two jumps coincide, relabel so that $a=b=d$, $c=e\ne d$, and put
\[
 f(x)=x^e+\frac1{x^d}+\frac1{(x-1)^d}+\frac1{x-1},
\]
again adding $1/R(x)$ for $r\ge3$. The only possible nontrivial base substitution is $x\mapsto1-x$. Leading order-$d$ terms at $0$ give $u=(-1)^d$, and after cancellation the transformed term $1/((1-x)-1)=-1/x$ produces the nonzero simple pole $-1/x$ at $0$; all other terms are regular at $0$. This cannot be an Artin--Schreier coboundary. Thus $M=1$.

In every case every geometric automorphism induces the identity on the base and therefore belongs to the deck group $C_p$.
\end{proof}

\section{Proofs of the main theorems and consequences}\label{sec:proofs}

\begin{proof}[Proof of Theorem \ref{thm:main}]
If $r=0$, the assertion is exactly Theorem~\ref{thm:zero-rank}. Suppose $r\ge1$. If $r=1$, Proposition~\ref{prop:two-poles} gives existence for every $N\ge3$, while Proposition~\ref{prop:N2-impossible} excludes the boundary pair $(1,2)$. If $r\ge2$, Proposition~\ref{prop:many-poles} gives existence for every $N\ge2r$.

It remains to record necessity of the inequality in positive $p$-rank. If $\gamma(X)=r(p-1)$, Proposition~\ref{prop:formulas} gives exactly $r+1$ poles, with reduced positive lower jumps $d_1,\ldots,d_{r+1}$. Hence
\[
 N=-2+\sum_{i=1}^{r+1}(d_i+1)
   =\sum_{i=1}^{r+1}d_i+r-1
   \ge2r.
\]
This proves the classification.
\end{proof}

\begin{proof}[Proof of Corollary \ref{thm:poonen-type}]
Take $r=1$ in Theorem~\ref{thm:main}. The condition in part~\textup{(ii)} is then exactly $N\ge3$.
\end{proof}

\begin{corollary}\label{cor:finite-fields}
Let $q$ be any power of an odd prime $p$. For every pair $(r,N)$ allowed by Theorem \ref{thm:main}, there exists a curve $X/\mathbb F_q$ with the same genus and $p$-rank and with
\[
 \Aut_{\overline{\mathbb F}_q}(X)\cong C_p.
\]
\end{corollary}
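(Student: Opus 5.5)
The plan is base change. Take the curve $X/\Fp$ produced by Theorem~\ref{thm:main} for the given pair $(r,N)$, and regard it as a curve over $\mathbb F_q$ via the inclusion $\Fp\subseteq\mathbb F_q$. That is, consider $X_{\mathbb F_q}=X\times_{\Fp}\mathbb F_q$, given by the same explicit equation $y^p-y=f(x)$ with $f\in\Fp(x)\subseteq\mathbb F_q(x)$.

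The key steps, in order, are the following.
\begin{enumerate}
\item Smoothness, projectivity and geometric irreducibility are preserved under base extension. Since $\overline{\mathbb F}_q=\Fpb$, the geometric curve $X_{\mathbb F_q}\times_{\mathbb F_q}\overline{\mathbb F}_q$ is literally the same as $X\times_{\Fp}\Fpb$.
\item Genus and $p$-rank are geometric invariants: the genus is computed after extension to the algebraic closure, and the $p$-rank is the $p$-rank of the Jacobian over $\overline{\mathbb F}_q$. So they agree with those of $X$, namely $\frac{p-1}2N$ and $r(p-1)$.
\item Since $\Aut_{\overline{\mathbb F}_q}(X_{\mathbb F_q})=\Aut_{\Fpb}(X)$, Theorem~\ref{thm:main} gives that this group is $C_p$.
\end{enumerate}

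There is no genuine obstacle. The only point to state carefully is the identification $\overline{\mathbb F}_q=\Fpb$ for $q$ a power of $p$, which makes the geometric automorphism group insensitive to the choice of base field between $\Fp$ and $\Fpb$. This holds because all the arguments in the paper, namely the Valentini--Madan descent, Lemma~\ref{lem:lift}, and the rigidity computations, were carried out over $k=\Fpb$.
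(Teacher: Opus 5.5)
Your proposal is correct and is exactly the paper's argument. Base-change the $\Fp$-model from Theorem~\ref{thm:main} to $\mathbb F_q$ and note that, since $\overline{\mathbb F}_q=\Fpb$, the geometric curve is unchanged; hence so are its genus, its $p$-rank and its geometric automorphism group.
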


\begin{proof}
Base-change the curve constructed over $\Fp$ to $\mathbb F_q$. The geometric curve is unchanged.
\end{proof}

\begin{corollary}\label{cor:jacobian}
Let $X$ be one of the curves in Theorem \ref{thm:main}, and let $(J_X,\Theta_X)$ be its principally polarized Jacobian; here $\Aut(J_X,\Theta_X)$ denotes the group of origin-preserving automorphisms of the principally polarized abelian variety. Then $X$ is nonhyperelliptic and
\[
 \Aut_{\Fpb}(J_X,\Theta_X)\cong C_p\times C_2\cong C_{2p}.
\]
\end{corollary}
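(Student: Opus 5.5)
Two things have to be proved: that $X$ is nonhyperelliptic, and that $\Aut(J_X,\Theta_X)\cong\Aut(X)\times\{\pm1\}$. The second follows from Torelli once the first is known. The precise form of Torelli is: for a curve of genus $g\ge2$, the map $\Aut(X)\to\Aut(J_X,\Theta_X)$ is injective. Its image together with $-1$ generates the whole group. The product is direct and $-1\notin\operatorname{im}$ exactly when $X$ is not hyperelliptic; for hyperelliptic $X$ one has $\Aut(J_X,\Theta_X)\cong\Aut(X)$. So once nonhyperellipticity is known, Theorem~\ref{thm:main} gives
\[
\Aut(J_X,\Theta_X)\cong \Aut_{\Fpb}(X)\times C_2\cong C_p\times C_2\cong C_{2p},
\]
using that $p$ is odd. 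One also needs $g(X)\ge2$, which holds since $N\ge2$ in all allowed pairs and $p\ge3$.

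For nonhyperellipticity I would argue by contradiction from the automorphism group. Suppose $X$ were hyperelliptic. The hyperelliptic involution $\iota$ is then unique and central in $\Aut_{\Fpb}(X)$, because it is canonically attached to the $g^1_2$. In particular $\iota$ is a nontrivial automorphism of order $2$. But $\Aut_{\Fpb}(X)\cong C_p$ has odd order, so it contains no element of order $2$, a contradiction. Hence $X$ is nonhyperelliptic. This requires only that the hyperelliptic involution is a genuine automorphism of $X$ defined over $\Fpb$, which is standard in odd characteristic (and indeed in any characteristic for the smooth curve).

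The only real point needing care is citing Torelli in the automorphism form, which is valid over any algebraically closed field (Weil; see also Milne's Jacobian notes). I would state it as: the natural map $\Aut(X)\times\{\pm1\}\to\Aut(J_X,\Theta_X)$ is an isomorphism if $X$ is nonhyperelliptic. The identification $C_p\times C_2\cong C_{2p}$ then finishes the proof.
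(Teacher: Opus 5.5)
Your proposal is correct and matches the paper's proof: nonhyperellipticity follows because $\Aut_{\Fpb}(X)\cong C_p$ has odd order and so contains no hyperelliptic involution, and the automorphism form of Torelli (Milne, Theorem~12.1) then gives $\Aut_{\Fpb}(J_X,\Theta_X)=\Aut_{\Fpb}(X)\times\{\pm1\}\cong C_{2p}$. The paper records the slightly stronger bound $g\ge3$ (from $N\ge3$) rather than your $g\ge2$, but either bound suffices here.
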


\begin{proof}
In every case of Theorem~\ref{thm:main} one has $N\ge3$, so $g(X)=\frac{p-1}2N\ge N\ge3$. Since $p$ is odd and $\Aut_{\Fpb}(X)\cong C_p$, the curve has no involution and hence is not hyperelliptic. The natural action on divisor classes gives an injective homomorphism
\[
 \Aut_{\Fpb}(X)\hookrightarrow \Aut_{\Fpb}(J_X,\Theta_X).
\]
By the strong Torelli theorem \cite[Theorem~12.1]{Milne}, every automorphism of the principally polarized Jacobian is induced, up to sign, by an automorphism of the curve: every polarized group automorphism is of the form $\varepsilon\,\alpha_*$ with $\varepsilon\in\{\pm1\}$ and $\alpha\in\Aut_{\Fpb}(X)$, and in the nonhyperelliptic case the pair $(\varepsilon,\alpha)$ is unique. Thus
\[
 \Aut_{\Fpb}(J_X,\Theta_X)=\Aut_{\Fpb}(X)\,\langle[-1]\rangle.
\]
The involution $[-1]$ is central, and by the uniqueness statement it is not induced by an automorphism of $X$. Consequently the product is direct:
\[
 \Aut_{\Fpb}(J_X,\Theta_X)\cong \Aut_{\Fpb}(X)\times\{\pm1\}
 \cong C_p\times C_2\cong C_{2p}.
\]
\end{proof}

\section{Summary of the models and further remarks}\label{sec:remarks}

Table~\ref{tab:models} collects the models; in all of them $f\in\Fp(x)$. The auxiliary polynomials appearing in several rows are chosen in the preceding lemmas and propositions.

\begin{table}[ht]
\centering
\renewcommand{\arraystretch}{1.25}
\begin{tabular}{@{}lll@{}}
\toprule
Range & $f(x)$ & Reference\\
\midrule
$r=0$, $3\le N\le6$ & see the table in Lemma~\ref{lem:small-one-pole} & Lemma~\ref{lem:small-one-pole}\\
$r=0$, $N\ge7$ & $x^d+x^{d-s}+x^{d-t}$ or $x^d+x^{d-2}+x^{d-5}$, $d=N+1$ & Prop.~\ref{prop:one-pole-large}\\
$r=1$, $N\ge3$ & $x^b+x^t+x^{-a}$;\quad $(p,N)=(3,4)$: $x^2+x+x^{-2}$ & Prop.~\ref{prop:two-poles}\\
$r\ge2$, $m=0$, $r\ne p-1$ & $1/C_{r+1}(x)$ & Prop.~\ref{prop:many-poles}\\
$r=p-1\ge4$, $m=0$ & $1/(x^p+x^2+x+1)$ & Lemma~\ref{lem:p-point-set}\\
$p=3$, $r=2$, $m=0$ & $x^2/(x^3+2x^2+1)$ & Lemma~\ref{lem:p3-boundary}\\
$r\ge2$, $m=1$ & $x^2+x+\frac1x+\frac1{x-1}$\ ($r=2$);\quad $x^2+\frac1{A_r}$ & Prop.~\ref{prop:many-poles}\\
$r\ge2$, $m=2$, $p\ge5$ & $x^3+\frac1x+\frac1{x-1}$\ ($r=2$);\quad $x^3+\frac1{A_r}$ & Prop.~\ref{prop:many-poles}\\
$r\ge2$, $m=2$, $p=3$ & $x^2+x^{-2}+x^{-1}+1/B_{r-1}$ & Prop.~\ref{prop:many-poles}\\
$r\ge2$, $m=3$ & $x^2+x^{-2}+(x-1)^{-2}\ (+1/R)$ & Prop.~\ref{prop:many-poles}\\
$p=3$, $r\ge2$, $m=4$ & $x^4+x^{-2}+1/S_r$ & Prop.~\ref{prop:many-poles}\\
remaining $m$ & $x^c+x^{-a}+(x-1)^{-b}\,(+(x-1)^{-1})\,(+1/R)$ & Prop.~\ref{prop:many-poles}\\
\bottomrule
\end{tabular}
\caption{The explicit models realizing Theorem~\ref{thm:main}; here $m=N-2r$.}
\label{tab:models}
\end{table}

\begin{remark}
Theorem~\ref{thm:main} leaves no numerical existence cases open inside the Artin--Schreier locus. For $r\ge1$, Proposition~\ref{prop:formulas} forces $N\ge2r$, every boundary value with $r\ge2$ is attained, and $(r,N)=(1,2)$ is genuinely impossible. For $r=0$, the unique lower jump is $N+1$, so $p\nmid N+1$ is necessary; Proposition~\ref{prop:zero-obstructions} shows that $N=1$, $N=2$, and $(p,N)=(3,3)$ carry unavoidable extra automorphisms, while Lemma~\ref{lem:small-one-pole} and Proposition~\ref{prop:one-pole-large} realize every remaining admissible value.
\end{remark}

\begin{remark}
The Valentini--Madan theorem is essential at the sharp boundary. A Castelnuovo--Severi argument alone requires $g>(p-1)^2$ (Remark~\ref{rem:CS}). The only places where the numerical invariants of our models coincide with those of an exceptional family are the ordinary boundary $(r,N)=(p-1,2p-2)$ and the one-pole genera with $N+1\mid p+1$; these are handled by Lemmas~\ref{lem:E-rigid} and~\ref{lem:monomial}, respectively.
\end{remark}

\begin{remark}
For $p=2$, Artin--Schreier curves are hyperelliptic and the deck group is the hyperelliptic involution. Zhu proved that over $\mathbb F_2$ one can prescribe the $2$-rank and obtain no extra automorphisms in every relevant genus \cite{Zhu}. Theorem \ref{thm:main} is an odd-characteristic counterpart for minimal automorphism groups inside the Artin--Schreier locus. It does not answer the prime-field questions recorded by Achter--Glass--Pries \cite[Remark~1.2]{AGP} for arbitrary curves and for hyperelliptic curves in odd characteristic: those questions ask for trivial automorphism group, respectively only the hyperelliptic involution, whereas the deck group $C_p$ is intrinsic to the Artin--Schreier locus.
\end{remark}

We end with two natural questions suggested by the method.

\begin{question}
Let $G$ be a finite $p$-group. For which genera and $p$-ranks does there exist a $G$-Galois cover $X\to\PP$ defined over $\Fp$ with $\Aut_{\Fpb}(X)=G$? For $G=C_{p^n}$ (Artin--Schreier--Witt covers) the descent step would require an analogue of the Valentini--Madan theorem.
\end{question}

\begin{question}
Within a fixed $p$-rank stratum, can one also prescribe the Newton polygon, or the $a$-number, of an Artin--Schreier curve $X/\Fp$ with $\Aut_{\Fpb}(X)\cong C_p$? For instance, which genera admit a supersingular such curve?
\end{question}

\section*{Acknowledgements}
Saeed Tafazolian was partially supported by CNPq grant no.~302774/2025-4 and FAPESP grant no.~2024/00923-6.

\end{document}